\newcommand{\off}[1]{}
\def\IR{\relax{\rm I\kern-.18em R}}
\def\p{\partial}
\def\<{\langle}
\def\>{\rangle}
\def\sgn{\textrm{sgn}}
\DeclareMathOperator{\Div}{div}
\begin{document}

\title{Semi-Inner-Products for Convex Functionals and Their Use in Image Decomposition}


\author{Guy Gilboa
}


\institute{ Department of Electrical Engineering, \\
            Technion -– Israel Institute of Technology,\\
            Haifa 32000, Israel
            \email{guy.gilboa@ee.technion.ac.il}           
}

\date{Received: date / Accepted: date}


\maketitle
\bibliographystyle{plain}

\begin{abstract}
Semi-inner-products in the sense of Lumer are extended to convex functionals.
This yields a Hilbert-space like structure to convex functionals in Banach spaces.
In particular, a general expression for semi-inner-products with respect to one homogeneous functionals is given. Thus one can use the new operator for the analysis of total variation and higher order functionals like total-generalized-variation (TGV).
Having a semi-inner-product, an angle between functions can be defined in a straightforward manner.
It is shown that in the one homogeneous case the Bregman distance can be expressed in terms of this newly defined angle.
In addition, properties of the semi-inner-product of nonlinear eigenfunctions induced by the functional are derived.
We use this construction to state a sufficient condition for a perfect decomposition of two signals and suggest numerical measures which indicate when those conditions are approximately met.
\keywords{Semi-inner-product \and Total variation \and Nonlinear eigenfunctions \and Image decomposition}
\end{abstract}


\section{Introduction}
Formulating image-processing and computer-vision tasks as variational problems, has been
used extensively, with great succuss for denoising, segmentation, optical flow, stereo matching, 3D reconstruction and more
\cite{ak_book02,Intro_TV_Chambolle2010,TV_Zoo_Burger_Osher2013}.
In those cases regularizing functionals are used to avoid non-physical solutions and to overcome problems related with noisy measurements.
For images, depth and optical-flow maps, and many other modalities - the signals have inherent discontinuities.
Therefore, an appropriate mathematical modeling should account for that.
One-homogeneous functionals, specifically based on the $L^1$ norm, can cope well with discontinuities.
The most classical one is the total variation (TV) functional as first introduced for image processing in \cite{rof92}
and, in recent years, the proposition of total-generalized-variation (TGV) \cite{bredies_tgv_2010}
which has increased the applicability of such regularizers from essentially piecewise constant to piecewise smooth solutions.


Recently, there is an emerging branch of studies trying to use functionals in alternative ways, broadening their analytical scope and usability \cite{Gilboa_spectv_SIAM_2014,Benning_Burger_2013,spec_one_homog15}.
In this context solutions of nonlinear eigenvalue problems induced by the regularizer are assumed as the fundamental structuring elements.
A nonlinear spectral theory is developed, where operations such as nonlinear low-pass and high-pass filters can be performed.

In this paper we introduce an additional necessary ingredient in nonlinear spectral analysis of functionals, which is a weaker form of
the inner-product to Banach spaces. It is referred to as a \emph{semi-inner-product} and was first introduced by Lumer in \cite{Lumer61}.
We define the properties of a semi-inner-product for functionals and present the formulation for the one-homogeneous case.
We then introduce a notion of semi-inner-products of degree $q$, where for $q=1/2$ this definition provides a useful construct.
Properties of semi-inner-products in the case of nonlinear eigenfunctions are discussed, where things simplify considerably.
Finally, we connect these new notions to the problem of image decomposition, see e.g. \cite{Meyer[1],Luminita[1],Aujol[3],agco06,gsz_var06,Tai_multilayer_graph_cut_seg_PR2013}.
A necessary condition for perfect decomposition is stated and soft indicators of how well two signals can be decomposed using a regularizer and
nonlinear spectral filtering are formulated.

\subsection{Main contributions}
The main contributions of the paper are:
\begin{enumerate}
\item Defining the properties of semi-inner-products for general convex functionals
from which angles and orthogonality measures, with respect to a functional, can be derived.
\item Proposing a semi-inner-product formulation for the case of one-homogeneous functionals,
$$ [u,v]_J:= \<u,p(v)\>J(v),  \,\, p(v) \in \p J(v).$$
\item Extending semi inner products to be of degree $q$ and showing the applicability for $q=1/2$.
\item Showing that in the case of $J$ being one-homogeneous the Bregman distance \cite{bre67} can be related to the angle
between the functions $u$ and $v$ by
$$ D_J(u,v) = J(u)\left(1 - \cos(\textrm{angle}(u,v)) \right).$$
\item Connecting the semi-inner-product to image decomposition through the recently proposed variational spectral filtering  approach \cite{Gilboa_spectv_SIAM_2014} and presenting
a sufficient condition for perfect decomposition of functions admitting the nonlinear eigenvalue problem \eqref{eq:eigenfunction}, (in Th. \ref{th:decomp}).
\item Proposing two soft measures to estimate when a good decomposition is expected and validating these through numerical experiments.
\end{enumerate}

\off{
Spectral theory, eigenfunctions, decomposition.

Semi-inner-product,
Relations to learning, classification, kernels in Banach spaces..

In this study a semi-inner-product is defined for one homogeneous functionals, based on standard
convex analysis arguments.

The first result relates to nonlinear spectral theory~\cite{nonlin_spectral}, which has attracted
increasing interest lately, see e.g.~\cite{Benning_Burger_2013,Nonlin_Lap_spectral_2012,pLap_eigen_2013}.
In the segmentation and learning context, see formulation of the related Cheeger cut problem in \cite{Bresson_Tai_Chan_Szlam_Cheeger_2014}.
This is a fundamental property of eigenfunctions in convex analysis and is to be further developed in this research.

The second result shows that the framework is a generalization of standard TV filters and that many other new filters related to the functional
can be designed.

The TV-transform decomposition can be seen as a generalization and extension of earlier studies concerning image decomposition methods, such as~\cite{Meyer[1],Luminita[1],Aujol[3],agco06,gsz_var06,Tai_multilayer_graph_cut_seg_PR2013}.
In \cite{Steidl} Steidl et al have shown the close relations, and equivalence in a 1D discrete
setting, of the Haar wavelets to both TV regularization \cite{rof92} and TV flow \cite{tv_flow}.
This was later further developed in \cite{Steidl_2D}.
The TV-transform relies on the established theory of the TV flow proposed by Andreu et al
 in \cite{tv_flow} and further developed in \cite{andreu2002some,bellettini2002total,Steidl,burger2007inverse_tvflow,discrete_tvflow_2012,tvf_giga2010}.
}  

\section{Preliminaries}
We will now summarize four mathematical concepts and notions which are at the basis of this manuscript:
\begin{enumerate}
\item The semi-inner-product of Lumer.
\item Convex one-homogeneous functionals and their unique properties.
\item Functions admitting a nonlinear eigenvalue problem induced by a convex regularizer.
\item A recent direction suggested in \cite{Gilboa_spectv_SIAM_2014} of analyzing and processing regularization problems using a nonlinear spectral approach.
\end{enumerate}
We will see at the last section how all these components are brought together in the analysis of signal decomposition based on regularizing functionals.

\subsection{Semi-inner-product}
In \cite{Lumer61} Lumer introduced the notion of semi-inner-product (s.i.p.), where Giles \cite{Giles67} refined it by asserting
the homogeneity property for both arguments. Semi-inner-products have been used in the analysis of Banach spaces \cite{Bruck1973,sip_book2004} and
in recent years extending Hilbert-space-like concepts in the context of machine-learning and classification \cite{classification_Banach_Der_Lee_2007,zhang2009reproducing,NL_PCA2015}. In general, a s.i.p. is defined for complex-valued functions. Here we restrict ourselves to real-valued functions and follow the definitions of \cite{classification_Banach_Der_Lee_2007}.
\begin{definition}[Semi-inner-product]
\label{def:sip}
Let $(\mathcal{X},\|\cdot\|)$ be a real Banach space. A semi-inner-product on $\mathcal{X}$ is a real function
$[u,v]$ on $\mathcal{X} \times \mathcal{X}$ with the properties:
\begin{enumerate}
\item (Linearity in the first argument) $$[u_1 + u_2,v] = [u_1,v] + [u_2,v],$$
\item (Homogeneity in the first argument) $$[\alpha u,v] = \alpha [u,v],$$
\item (Norm-inducing) $$[u,u] = \|u\|^2,$$
\item (Cauchy-Schwarz inequality) $$[u,v] \le \|u\| \|v\|,$$
\item (Homogeneity in the second argument) $$[u,\alpha v] = \alpha [u,v].$$
\end{enumerate}
\end{definition}
Giles \cite{Giles67} has added the fifth property (Homogeneity in the second argument), arguing that in the case of norms this
does not impose additional restrictions and increases the structure. In the proposed generalizing to functionals, in some cases this condition will be omitted.
In \cite{Giles67} a semi-inner-product for $L^p$ norms $\|u\|_{L^p}=\left( \int_{\Omega}|u(x)|^p dx\right)^{1/p}$, $1 < p < \infty$
was proposed
\begin{equation}
\label{eq:Lp_sip}
 [u,v] :=  \int_{\Omega}u(x)v(x)|v(x)|^{p-2}dx \frac{1}{\|v\|_{L^p}^{p-2}}.
\end{equation}

\subsection{One-homogeneous functionals}
Let $J(u)$ be a proper, convex, lower semi-continuous regularization functional $J: {\mathcal{X}} \rightarrow \mathbb{R}^+ \cup \{\infty\}$ defined on Banach space ${\mathcal{X}}$. 
For $J$ which is a one homogeneous functional we have
\begin{equation}
\label{eq:J1hom}
 J(\alpha u) = |\alpha|J(u),  \,\, \alpha \in \mathbb{R}.
\end{equation}
 We assume that $J(u) > 0 $ for $u \in \mathcal{X} \setminus \{0\}$ (as done for instance in \cite{spec_one_homog15}). This can be achieved by choosing $\mathcal{X}$ restricted in the right way (note that the null-space of a convex one-homogeneous functional is a linear subspace of $\mathcal{X}$, \cite{Benning_Burger_2013}). E.g. in the case of total variation regularization we would consider the subspace of functions with vanishing mean value. The general case can be reconstructed by adding appropriate nullspace components.

Let $p(u) \in \mathcal{X}^*$ (where $\mathcal{X}^*$ is the dual space of $\mathcal{X}$) belong to the \emph{subdifferential} of $J(u)$, defined by:
\begin{equation}
\label{eq:subdif}
\begin{array}{l}
\p J(u):= \\
\left\{p(u) \in \mathcal{X}^* \,|\, J(v)-J(u) \ge \< p(u),v-u \>, \forall v \in \mathcal{X}\right\}.
\end{array}
\end{equation}
We denote $p(u) \in \p J(u)$, where an element $p(u)$ is referred to as a \emph{subgradient}.
For convex one homogeneous functionals it is well known \cite{Ekeland_Temam1999} that
\begin{equation}
\label{eq:up}
J(u) =  \< u,p(u) \>, \forall p(u) \in \p J(u).
\end{equation}
And also, for all  $p(u) \in \p J(u)$, $\mathbb{R} \ni \alpha \ne 0$, we have
\begin{equation}
\label{eq:p_alpha}
\sgn(\alpha) p(u) \in \p J(\alpha u),
\end{equation}
where $\sgn(\cdot)$ is the signum function.
From \eqref{eq:subdif} and \eqref{eq:up} we have that an element in the subdifferential of one-homogeneous functionals admits the following inequality:
\begin{equation}
\label{eq:subdif1hom_classic}
J(v) \ge \< v, p(u) \>, \forall p(u) \in \p J(u), \,v \in \mathcal{X}.
\end{equation}
In later sections we need a slight extension of this property, where the bound is with respect to the magnitude of the right-hand-side.
Since $J(-v)=J(v)$ we can also plug $-v$ in \eqref{eq:subdif1hom_classic} and get the bound $J(v)\ge -\< v, p(u) \>$, hence
\begin{equation}
\label{eq:subdif1hom}
J(v) \ge |\< v, p(u) \>|, \forall p(u) \in \p J(u), \,v \in \mathcal{X}.
\end{equation}

One-homogeneous functionals also admit the triangle inequality:
\begin{equation}
\label{eq:triangle}
J(u+v) \le J(u) + J(v).
\end{equation}
This can be shown by $J(u+v)=\<u+v,p(u+v)\>=\<u,p(u+v)\>+\<v,p(u+v)\>$
and using \eqref{eq:subdif1hom_classic} we have $J(u) \ge \<u,p(u+v)\>$ and $J(v) \ge \<v,p(u+v)\>$.

\subsection{Nonlinear Eigenfunctions}
Let us begin by stating the nonlinear eigenvalue problem induced by
a convex functional.
\begin{definition}[Eigenfunctions and eigenvalues induced by $J(u)$]
An eigenfunction $u$ induced by the functional $J(u)$ admits the following equation,
\begin{equation}
\label{eq:eigenfunction}
\lambda u  \in \partial J(u),
\end{equation}
where $\lambda \in \mathbb{R}$ is the corresponding eigenvalue.
\end{definition}

The analysis of eigenfunctions related to non- quadratic convex functionals was mainly concerned with the total variation (TV) regularization.
In the analysis of the variational TV denoising, i.e. the ROF model from \cite{rof92}, Meyer \cite{Meyer[1]} has shown an explicit solution for the case of a disk (an eigenfunction of TV), quantifying explicitly the loss of contrast
and advocating the use of $TV-G$ regularization.
Within the extensive studies of the TV-flow \cite{tvFlowAndrea2001,andreu2002some,bellettini2002total,tvf_giga2010} eigenfunctions of TV (referred to as \emph{calibrable sets}) were analyzed and explicit solutions were given for several cases of eigenfunction spatial settings. In \cite{iss} an explicit solution of a disk for the inverse-scale-space flow is presented, showing its instantaneous appearance at a precise time point related to its radius and height.


\subsubsection*{Generalizing a classical TV result}
In \cite{tv_flow} a connection between the eigenvalue $\lambda$ and
the perimeter to area ratio is established for the total-variation (TV) case.
Let us recall this relation. The TV functional is defined by
\begin{equation}
\label{eq:Jtv}
	J_{TV}(u) = \sup_{\|\varphi\|_{L^\infty(\Omega)}\le 1} \int_\Omega u \Div \varphi dx,
\end{equation}
with $\varphi \in C^\infty_0$.
For a convex set $A \subset \mathbb{R}^2$ let $f_A$ be the indicator
function of $A$ where $f(x)=1$ for any $x\in A$ and zero otherwise.
If $f_A$ is an eigenfunction (admits Eq. \eqref{eq:eigenfunction}) with
respect to the TV functional then
\begin{equation}
\label{eq:lam_tv}
\lambda = \frac{P(A)}{|A|},
\end{equation}
with $P(A)$ the perimeter of the set A and $|A|$ its area.
The proof is quite elaborated and is based on geometrical arguments.

Using convex analysis arguments, one can generalize this result to any function $f$ which
is an eigenfunction and any one-homogeneous convex functional.
If $J$ is a convex one-homogeneous functional and $f$ admits \eqref{eq:eigenfunction} then
\begin{equation}
\label{eq:lam_1hom}
\lambda = \frac{J(f)}{\|f\|^2}.
\end{equation}
This can be easily shown by using \eqref{eq:up} and \eqref{eq:eigenfunction} having
$$ J(f) = \<p(f),f \> = \<\lambda f, f \> = \lambda \| f \|^2.$$
Eq. \eqref{eq:lam_tv} is thus a special case of \eqref{eq:lam_1hom} where $f$ is an indicator function of a set, therefore $\|f\|^2=|A|$ and for $J$ being TV  we have $P(A)=J_{TV}(f)$, through the coarea formula.

\subsubsection*{Positive eigenvalues}
For the one homogeneous case we readily get from Eq. \eqref{eq:lam_1hom} that all eigenvalues are non-negative, $\lambda \ge 0$. The eigenvalues are strictly positive when $f$ is not in the null-space of $J$ (thus $J(f)>0$) and $\|f\| > 0$ .
A broader statement for general convex functionals is given in \cite{Gilboa_ef_flow} where it is shown that for any eigenfunction $f$, in which
$J(f)>J(0)$, $\|f\| > 0$, we have $\lambda > 0$.

\off{
This can be shown by using Eq. \eqref{eq:subdif} with $v=0$, yielding
$$ J(0) - J(u) \ge \langle p(u),-u \rangle. $$
For $p(u)=\lambda u$ we obtain $ J(u)-J(0) \le \lambda \| u\|^2$, and for any $u \in \mathcal{X}$ with $J(u)>J(0)$ we have
\begin{equation}
\label{eq:lam_pos_conv}
0 < \frac{J(u)-J(0)}{\|u\|^2} \le \lambda.
\end{equation}

{\bf Measure of affinity to eigenfunctions.} For any function $u \in \mathcal{X}$ we would like a quantitative measure
of how close the function is to an eigenfunction. The following measure is suggested in \cite{Gilboa_ef_flow} for one-homogeneous functionals:
\begin{equation}
\label{eq:ef_aff}
\textrm{A}_J(u):=\frac{J(u)}{\| p(u)\| \cdot \|u \|}.
\end{equation}
Using the relation $J(u)=\langle p(u),u\rangle$, the fact that $J(u)\ge0$ and the Cauchy-Schwarz inequality we have two important properties:
$$0 \le \textrm{A}_J(u) \le 1,$$
and
$$\textrm{A}_J(u) = 1 \textrm{ iff } p(u)=\lambda u.$$
That is, the measure is 1 for all eigenfunctions and only for them.
The measure then has a graceful degradation from 1 to 0.
} 

\off{
Let us define the projection of $u$ onto the plane orthogonal to $p(u)$:
$$w:=  u - \frac{\langle u,p(u) \rangle}{\|p(u)\|^2}p(u).$$
Then $\textrm{A}_J(u)$ decreases as $\|w\|$ increases, where for eigenfunctions $\|w\|=0$.
}

\subsection{The TV Transform}

In \cite{Gilboa_spectv_SIAM_2014} a generalization of eigenfunction analysis to the total-variation case was proposed in the following way.
Let $u(t;x)$ be the TV-flow solution \cite{tv_flow} or the gradient descent of the total variation energy $J_{TV}(u)$,  with initial condition $f(x)$:
\begin{equation}
\label{eq:TVflow}
	\partial_t u = - p, \qquad p \in \p J_{TV}(u), \qquad u(t=0)=f(x),
\end{equation}
where $J_{TV}$ is given in \eqref{eq:Jtv}.
The TV spectral transform is defined by
\begin{equation}
\label{eq:phi}
\phi(t;x) := t\partial_{tt}u (t;x),
\end{equation}
where $\partial_{tt}u$ is the second time derivative of the solution $u(t;x)$ of the TV flow \eqref{eq:TVflow}.
For $f(x)$ admitting \eqref{eq:eigenfunction}, with a corresponding eigenvalue $\lambda$, one obtains
a gradient flow Eq. \eqref{eq:TVflow} with a solution
\begin{equation}
\label{eq:u_ef}
u(t,x) = (1-\lambda t)^+ f(x),
\end{equation}
where $(q)^+ = q$ if $q > 0$ and 0 otherwise.
The spectral response becomes
\begin{equation}
\label{eq:phi_ef}
\phi(t;x)=\delta(t-1/\lambda)f(x),
\end{equation}
where $\delta(\cdot)$ denotes a Dirac delta distribution.

In the general case, $\phi$ yields a continuum multiscale representation of the image, generalizing structure-texture decomposition
methods like \cite{Meyer[1],Luminita[2],agco06}. For simplicity we assume signals with zero mean $\bar{f} = \frac{1}{\Omega}\int_\Omega f(x)dx = 0$.
One can reconstruct the original image by:
\begin{equation}
\label{eq:tv_recon}
f(x) = \int_0^\infty \phi(t;x) dt.
\end{equation}
Given a transfer function $H(t)\in \mathbb{R}$, image filtering can be performed by
\begin{equation}
\label{eq:tv_filt}
f_H(x) := \int_0^\infty H(t)\phi(t;x) dt.
\end{equation}
Simple useful filters are ones which either retain or diminish completely scales up to some cutoff scale.
The (ideal) low-pass-filter (LPF) can be defined by Eq. \eqref{eq:tv_filt} with $H(t)=1$ for $t \ge t_c$ and 0 otherwise, or
\begin{equation}
\label{eq:lpf}
LPF_{t_c}(f) := \int_{t_c}^\infty \phi(t;x) dt.
\end{equation}
Its complement, the (ideal) high-pass-filter (HPF), is defined by
\begin{equation}
\label{eq:hpf}
HPF_{t_c}(f) := \int_{0}^{t_c} \phi(t;x) dt.
\end{equation}
Similarly, band-(pass/stop)-filters are filters with low and high cut-off scale parameters ($t_1 < t_2$)
\begin{equation}
\label{eq:bpf}
BPF_{t_1,t_2}(f) := \int_{t_1}^{t_2} \phi(t;x) dt,
\end{equation}
\begin{equation}
\label{eq:bsf}
BSF_{t_1,t_2}(f) := \int_0^{t_1}\phi(t;x) dt + \int_{t_2}^\infty \phi(t;x) dt.
\end{equation}
The spectrum $S_f(t)$ corresponds to the amplitude of each scale of the input $f$:
\begin{equation}
\label{eq:S}
S_f(t) := \|\phi(t;x)\|_{L^1(\Omega)} = \int_\Omega |\phi(t;x)|dx.
\end{equation}

\begin{figure}[htb]
\begin{center}
\begin{tabular}{ cc }
\includegraphics[width=40mm]{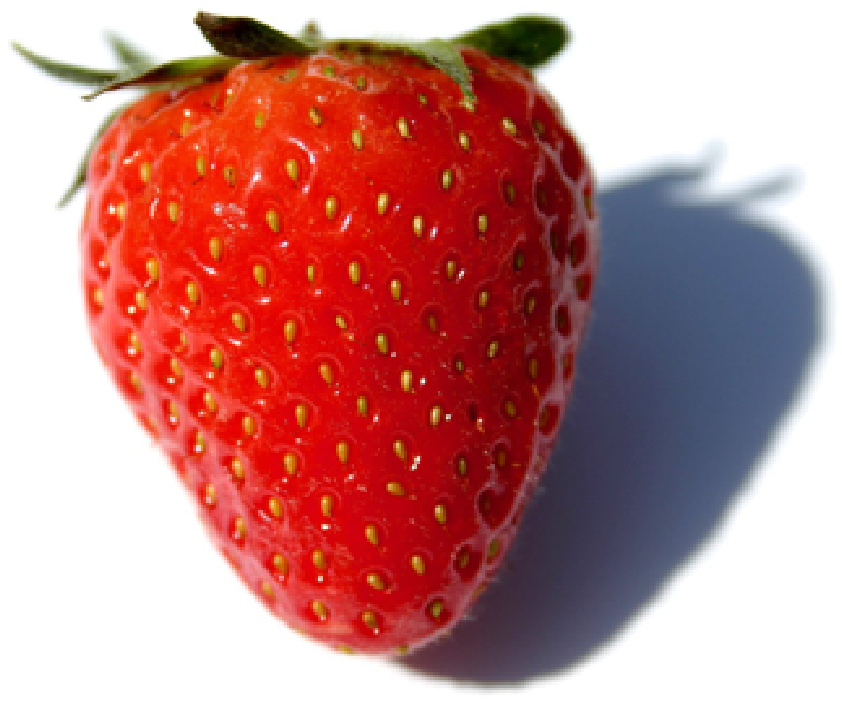}&
\includegraphics[width=40mm]{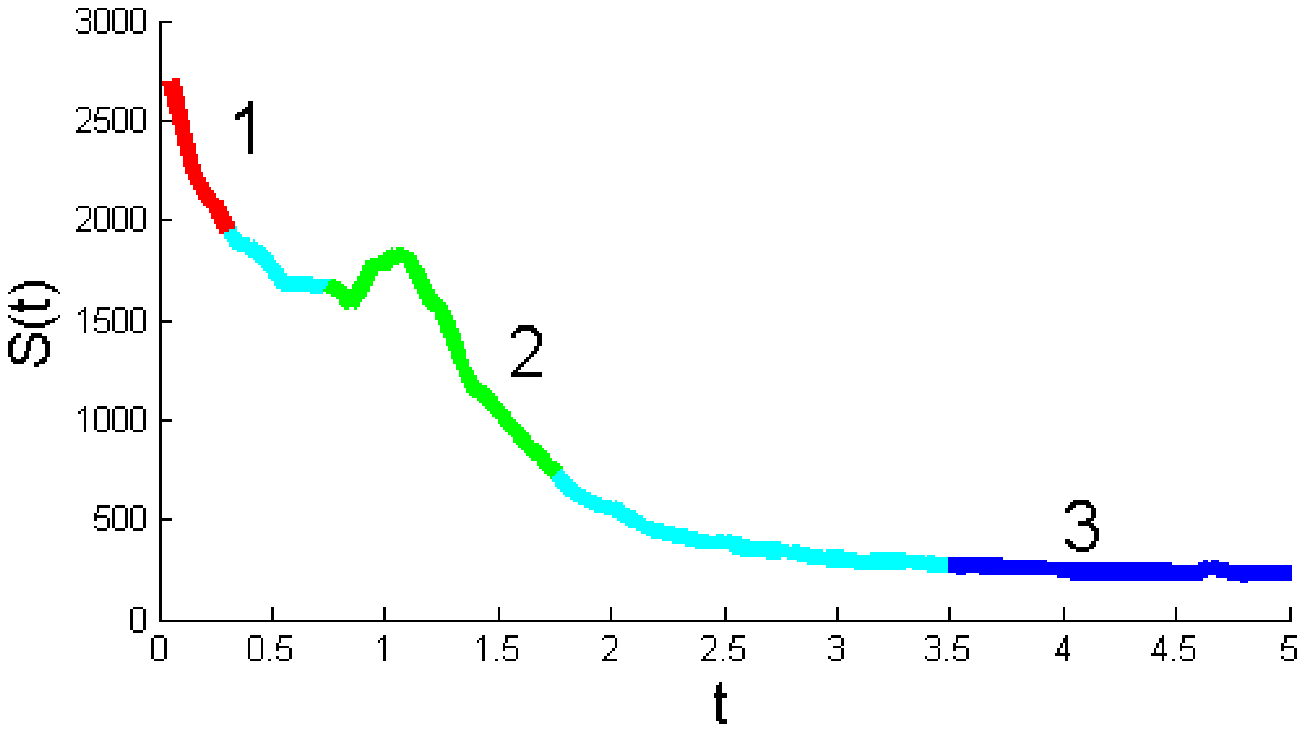}\\
Input $f$ &  $S_1(t)$\\
\includegraphics[width=40mm]{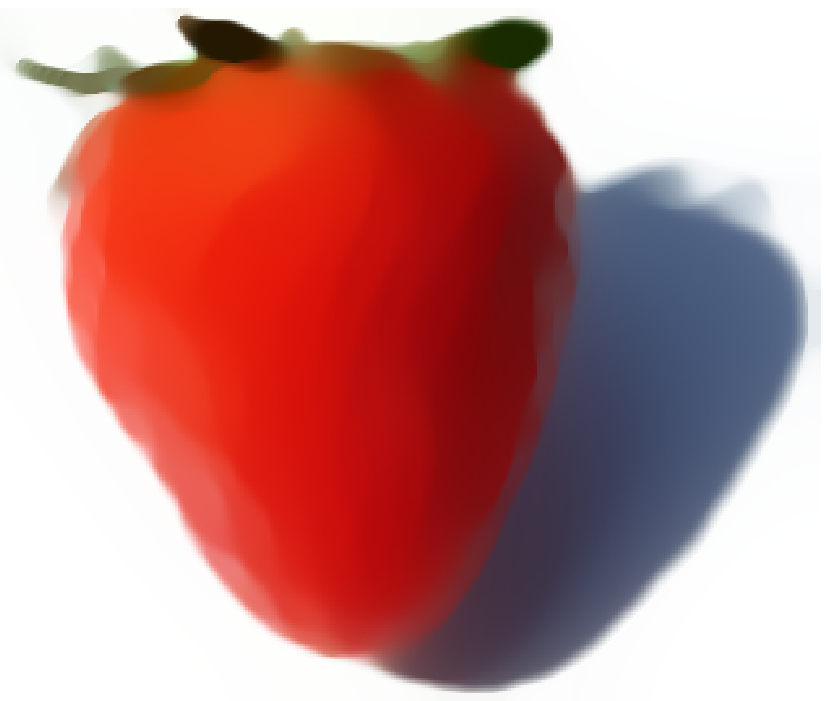}&
\includegraphics[width=40mm]{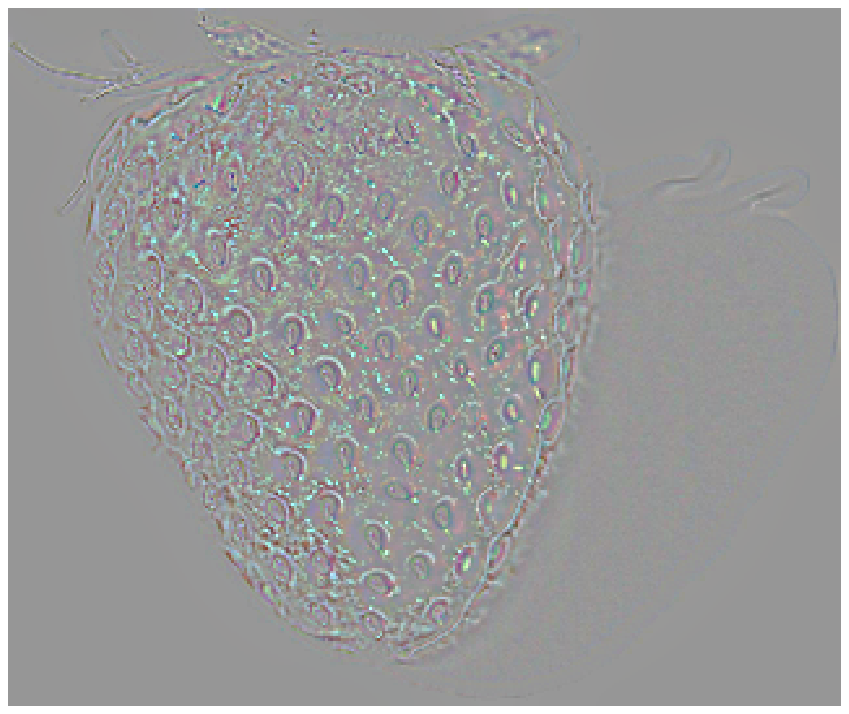}\\
Low-pass & High-pass\\
\includegraphics[width=40mm]{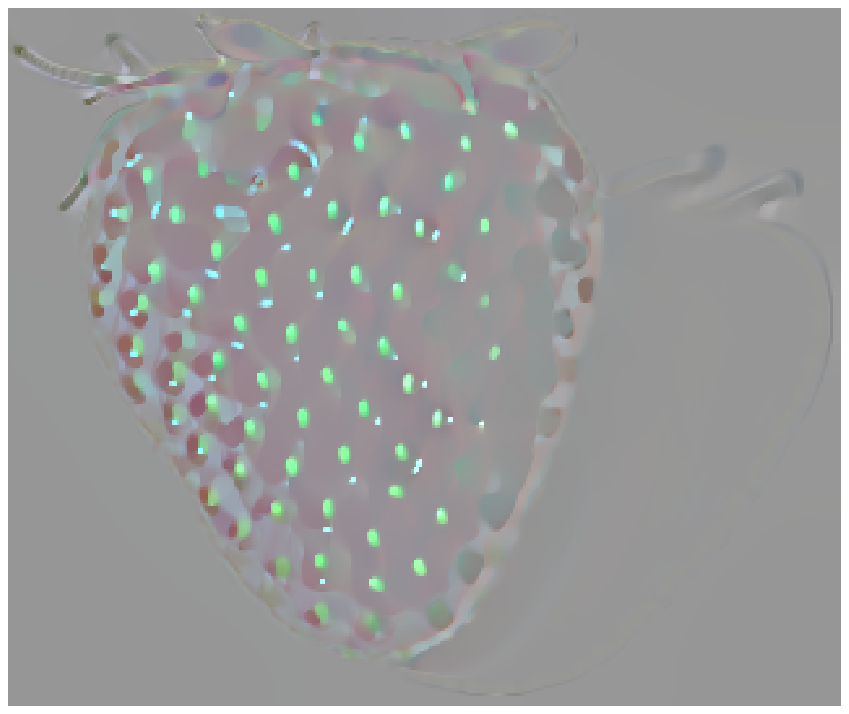}&
\includegraphics[width=40mm]{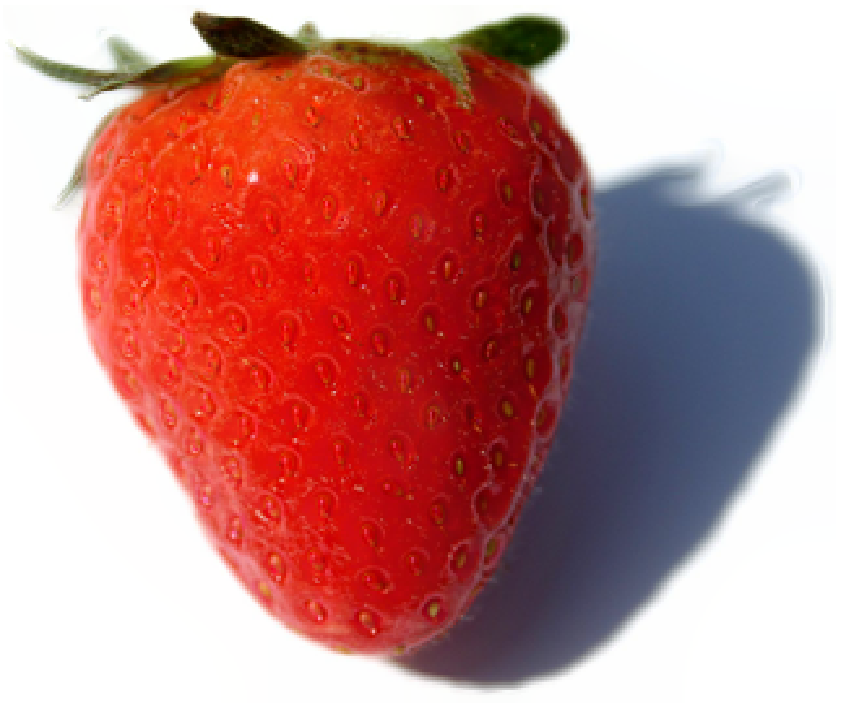}\\
Band-pass & Band-stop \\
\end{tabular}
\caption{Total-variation spectral filtering example. The input image (top left) is decomposed into its $\phi(t)$ components, the corresponding spectrum
$S_1(t)$ is on the top right. Integration of the $\phi$'s over the $t$ domains 1, 2 and 3 (top right) yields high-pass, band-pass and low-pass filters, respectively. The band-stop filter (bottom right) is the complement integration domain of region 2. Taken from \cite{spec_one_homog15}.}
\label{fig:strawberry}
\end{center}
\end{figure}

In Fig. \ref{fig:strawberry} an example of spectral TV processing is shown with the response of the four filters defined above in Eqs. \eqref{eq:lpf} through \eqref{eq:bsf}.

\subsection{Generalized Transform}
In \cite{spec_one_homog15} the spectral TV framework was generalized in several ways. First
the theory was extended to a wider class of one-homogeneous functionals.

For the general gradient flow of a one-homogeneous functional $J$,
\begin{align}
\label{eq:grad_flow}
\partial_t u(t) = -p(t), \qquad p(t) \in \partial J(u(t)), \ u(0)=f,
\end{align}
the spectral transform $\phi(t)$, the eigenfunction response, the reconstruction and the filtering, Eqs. \eqref{eq:phi}, \eqref{eq:u_ef}, \eqref{eq:phi_ef}, \eqref{eq:tv_recon}, \eqref{eq:tv_filt} all generalize
in a straightforward manner, retaining the same expressions.

\off{
An eigenfunction which admits \eqref{eq:eigenfunction} has the gradient flow solution
$$u(t;x) = (1-\lambda t)_+ f(x),$$
yielding
$$\phi_*(t) = f \delta_{\frac{1}\lambda}(t).$$
}

A new spectrum was defined by
\begin{equation}
\label{eq:newSdefinition}
	S_2(t) = t \sqrt{ \frac{d^2}{dt^2}J(u(t)) } = \sqrt{\langle \phi(t), 2 t p(t)} \rangle,
\end{equation}
for which an analogue of the Parseval identity can be derived
$$ 	\Vert f \Vert^2 = \int_0^\infty S_2(t)^2  ~dt.$$

An orthogonality property was shown
\begin{equation}
\label{eq:phi_u}
	\langle \phi(t), u(t) \rangle = 0, \,\, \forall t>0.
\end{equation}
An overview of these ideas with relations to some classical signal processing methods are presented in \cite{jmivPreprint}.

With the preliminary settings and definitions in place we can now continue to the
main contributions of the paper concerning generalized s.i.p's.

\section{A semi-inner-product for convex functionals}
Let us define a semi-inner-product for convex functionals, in a similar manner to Definition \ref{def:sip}.
As we will show later, a function which admits the properties below may not be unique. Therefore, in
a similar manner to the subdifferential, we allow the semi-inner-product to be a set of possibly more than one element.
We denote by $[u,v]_J$ an element and by $\{ [u,v]_J \}$ the set of admissible s.i.p.'s.
We will later see for the one-homogeneous case that when a specific subgradient of the second argument is chosen the s.i.p. is unique.
\begin{definition}[Semi-inner-product of a convex functional, partial homogeneity]
\label{def:fsip}
Let $J$ be a convex functional $J : \mathcal{X} \to \mathbb{R}^+ \cup \{ \infty \}$ defined on a Banach
space $\mathcal{X}$. 
A semi-inner-product with partial homogeneity on $\mathcal{X}$ is a real function $[u,v]_J$ on $\mathcal{X} \times \mathcal{X}$ with the properties:
\begin{enumerate}
\item (Linearity in the first argument)
$$[u_1 + u_2,v]_J = s + q,\, s \in \{ [u_1,v]_J\},\, q \in \{ [u_2,v]_J\}.$$
\item (Homogeneity in the first argument) $$[\alpha u,v]_J \in \{ \alpha [u,v]_J \}, \, \alpha \in \mathbb{R}.$$
\item (Functional-inducing) $$[u,u]_J = J^2(u).$$
\item (Cauchy-Schwarz-type inequality)
\begin{equation}
\label{eq:CS-type}
\sqrt{|[u,v]_J[v,u]_J|} \le J(u)J(v).
\end{equation}
\end{enumerate}
\end{definition}

A stricter definition, with homogeneity in both arguments is defined by
\begin{definition}[Semi-inner-product of a convex functional, full homogeneity]
\label{def:fsip_full}
Following the same notations of Def. \ref{def:fsip},  $[u,v]_J$ is a semi-inner-product with full homogeneity if it admits all the properties of Def. \ref{def:fsip} and in addition:
\begin{enumerate}
\setcounter{enumi}{4}
\item (Homogeneity in the second argument) $$[u,\alpha v]_J \in \{ \alpha [u,v]_J \}.$$
\end{enumerate}
\end{definition}

\subsection{Semi-inner-product formulations}
It can be verified that for functionals of the form
\begin{equation}
\label{eq:J_H}
J_{\mathcal{H}}(u)=\|u\|_{\mathcal{H}}^2,
\end{equation}
with $\{\|\cdot\|_{\mathcal{H}},\<\cdot,\cdot \>_{\mathcal{H}}\}$ a Hilbert-space norm and inner-product, respectively, a semi-inner-product in the sense of Def. \ref{def:fsip} is:
\begin{equation}
\label{eq:sip_H}
[u,v]_{J_\mathcal{H}}:= \<u,v\>_{\mathcal{H}} \|v\|^2_{\mathcal{H}}.
\end{equation}

However, our main focus of the paper is devoted to functionals not based on a Hilbert-space but
on smoothing, discontinuity preserving functionals such as the total-variation or the total-generalized-variation. Those functionals are extremely useful in processing images and many other
types of signals with inherent discontinuities, such as depth-maps or optical-flow fields.
Those functionals are one-homogeneous and therefore a full homogeneity semi-inner-product can be
defined.

\begin{theorem}
\label{th:sip_1hom}
Let $J$ be a convex one-homogeneous functional and $p(v) \in \partial J(v)$ a subgradient. Then a corresponding semi-inner-product with full homogeneity in the sense of Def. \ref{def:fsip_full} is
\begin{equation}
\label{eq:sip_1hom}
[u,v]^{p(v)}_J:= \<u,p(v)\>J(v), 
\end{equation}
where $\<\cdot,\cdot\>$ denotes the $L^2$ inner product.
\end{theorem}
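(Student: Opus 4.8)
The plan is to verify the five properties of Definition \ref{def:fsip_full} one at a time for the candidate $[u,v]_J^{p(v)} = \langle u, p(v)\rangle J(v)$. The first thing I would stress is that once the subgradient $p(v)\in\partial J(v)$ is fixed, this expression is a single real number rather than a set; the set-valued character of $\{[u,v]_J\}$ arises solely from the freedom in choosing $p(v)$, which is precisely why properties 1, 2 and 5 are stated with the membership symbol. With $p(v)$ fixed, $J(v)$ is merely a constant multiplying an $L^2$ pairing, so linearity and homogeneity in the first argument follow at once from bilinearity of $\langle\cdot,\cdot\rangle$ in its first slot: choosing the common subgradient $p(v)$ for all three pairings gives $\langle u_1+u_2,p(v)\rangle J(v)=\langle u_1,p(v)\rangle J(v)+\langle u_2,p(v)\rangle J(v)$, and $\langle \alpha u,p(v)\rangle J(v)=\alpha\langle u,p(v)\rangle J(v)$, which are exactly the required forms.

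For the functional-inducing property I would specialize to $v=u$ and invoke the fundamental identity \eqref{eq:up} for one-homogeneous functionals, $J(u)=\langle u,p(u)\rangle$, giving $[u,u]_J^{p(u)}=\langle u,p(u)\rangle J(u)=J(u)\cdot J(u)=J^2(u)$ for every admissible subgradient.

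The Cauchy-Schwarz-type inequality is where the extended subgradient bound \eqref{eq:subdif1hom} does the work. Writing the symmetrized product explicitly,
\begin{equation*}
|[u,v]_J^{p(v)}\,[v,u]_J^{p(u)}| = |\langle u, p(v)\rangle|\,|\langle v, p(u)\rangle|\,J(u)J(v),
\end{equation*}
I would bound each pairing by the opposite functional value: \eqref{eq:subdif1hom} with roles $(u,v)\to(v,u)$ gives $|\langle u,p(v)\rangle|\le J(u)$, and directly $|\langle v,p(u)\rangle|\le J(v)$. Substituting yields $|[u,v]_J[v,u]_J|\le J^2(u)J^2(v)$, and taking square roots gives exactly \eqref{eq:CS-type}. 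In fact each one-sided factor already satisfies $|[u,v]_J|\le J(u)J(v)$, so the symmetrization is not strictly needed here, but the stated form follows regardless.

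Homogeneity in the second argument is the step demanding the most careful bookkeeping, and I expect it to be the main subtlety. For $\alpha\neq 0$ I would use the subgradient scaling rule \eqref{eq:p_alpha} to select the representative $p(\alpha v)=\sgn(\alpha)\,p(v)\in\partial J(\alpha v)$, together with one-homogeneity \eqref{eq:J1hom}, $J(\alpha v)=|\alpha|J(v)$, so that
\begin{equation*}
[u,\alpha v]_J^{p(\alpha v)} = \langle u, \sgn(\alpha)p(v)\rangle\,|\alpha|J(v) = \sgn(\alpha)|\alpha|\,\langle u,p(v)\rangle J(v) = \alpha\,[u,v]_J^{p(v)},
\end{equation*}
the collapse $\sgn(\alpha)|\alpha|=\alpha$ being the key sign identity and the reason the membership (rather than equality) formulation is natural. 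The case $\alpha=0$ I would handle separately by noting that one-homogeneity forces $J(0)=0$, so $[u,0]_J=0=0\cdot[u,v]_J$, consistent with the statement. Assembling the five verified properties establishes that $[u,v]_J^{p(v)}$ is a full-homogeneity semi-inner-product in the sense of Definition \ref{def:fsip_full}.
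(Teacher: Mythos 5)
Your proposal is correct and follows essentially the same route as the paper's proof: linearity and first-argument homogeneity from the bilinearity of the $L^2$ pairing, the functional-inducing property from \eqref{eq:up}, the Cauchy--Schwarz-type bound from \eqref{eq:subdif1hom}, and second-argument homogeneity from \eqref{eq:J1hom} together with the subgradient scaling rule \eqref{eq:p_alpha}. The only additions beyond the paper's argument are minor (explicitly treating $\alpha=0$ and remarking that each one-sided factor already satisfies the Cauchy--Schwarz bound, which the paper also notes afterwards in \eqref{eq:CS}).
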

\begin{proof}
Linearity and homogeneity in the first argument are straightforward consequences of using the $L^2$ inner product.
We now want to show the property of homogeneity in the second argument.
We use Eqs. \eqref{eq:J1hom} and \eqref{eq:p_alpha} to have
$ p(v) \in \p J(v)$ and $ p(\alpha v) \in \p J(\alpha v)$ with the relation
$p(\alpha v)=\sgn(\alpha)p(v)$ and therefore
\begin{displaymath}
\begin{array}{ll}
[u,\alpha v]^{p(\alpha v)}_J & = \<u,p(\alpha v)\>J(\alpha v) \\
& =\<u,\sgn(\alpha)p(v)\>|\alpha|J(v) \\
& = \alpha[u,v]^{p(v)}_J \in \{ \alpha [u,v]_J \}.
\end{array}
\end{displaymath}
Using \eqref{eq:up} we get $[u,u]^{p(u)}_J=\<u,p(u)\>J(u)=J^2(u)$.
Finally for the Cauchy-Schwarz property, using \eqref{eq:subdif1hom} we have
$\forall p(u) \in \p J(u)$, $J(v) \ge |\<v,p(u)\>|$ and $\forall p(v) \in \p J(v)$, $J(u) \ge |\<u,p(v)\>|$, therefore
$$ |[u,v]^{p(v)}_J| = |\<u,p(v)\>| J(v) \le J(u)J(v)$$
and also
$$ |[v,u]^{p(u)}_J| = |\<v,p(u)\>|J(u) \le J(v)J(u).$$
\end{proof}

\off{  
Based on the triangle inequality \eqref{eq:triangle} a useful bound is
\begin{equation}
\label{eq:u_uv_bound}
|[u,u+v]_J| \le J^2(u)+J(u)J(v).
\end{equation}
This is shown by using the definition $|[u,u+v]_J| = |\<u,p(u+v) \>J(u+v)|$. From
\eqref{eq:subdif1hom} and \eqref{eq:triangle} we have $ |\<u,p(u+v)\>|  \le J(u)$ and
$J(u+v) \le J(u)+J(v)$, respectively.
}

As noted in the proof, for the one-homogeneous s.i.p. a classical Cauchy-Schwarz inequality holds
\begin{equation}
\label{eq:CS}
|[u,v]_J^{p(v)}| \le J(u)J(v).
\end{equation}

As an example, let us take the $L^q$ norm, $J_{L^q}(u)=\|u\|_{L^q}$, for $1 < q < \infty$.
Then $p(u) = |u|^{q-2}u \|u\|_{L^q}^{1-q}$ and Eq. \eqref{eq:sip_1hom} coincides with \eqref{eq:Lp_sip}.

\subsection{Generalized notions of angle and orthogonality}
With the s.i.p. one can define an angle between functions $u$ and $v$.
For brevity, we will omit the superscript $p(v)$ when the context is clear.
In the one-homogeneous case, using the above inequality, we can define the angle between $u$ and $v$ by
\begin{equation}
\label{eq:angle}
\begin{array}{ll}
\textrm{angle}(u,v) & := \cos^{-1}\left( \frac{[u,v]_J}{J(u)J(v)}\right). 
\end{array}
\end{equation}
Note that there is no symmetry in the above definition, so in general $\textrm{angle}(u,v) \ne \textrm{angle}(v,u)$.

For a symmetric angle expression, there are two main options, an algebraic mean,
\begin{equation}
\label{eq:angle_sym1}
\begin{array}{ll}
\textrm{angle}_{sym-a}(u,v) & := \cos^{-1}\left( \frac{\frac{1}{2}\left([u,v]_J+[v,u]_J\right)}{J(u)J(v)}\right), 
\end{array}
\end{equation}
and a geometric mean (which also applies for the general convex case, in which the inequality of \eqref{eq:CS-type} holds),
\begin{equation}
\label{eq:angle_sym2}
\begin{array}{ll}
\textrm{angle}_{sym-g}(u,v) & := \cos^{-1}\left( \frac{\mathcal{S}\left([u,v]_J,[v,u]_J\right)}{J(u)J(v)}\right), 
\end{array}
\end{equation}
where $\mathcal{S}(a,b):=\sgn(ab)\sqrt{|ab|}$ is a signed square-root.

Orthogonality of two functions can be expressed as having an angle of $\frac{\pi}{2}$ between them. In the case
of the nonsymmetric angle of \eqref{eq:angle} we refer to $u$ as \emph{orthogonal} to $v$ if $0 \in \{[u,v]_J\}$
and to $v$ as \emph{orthogonal} to $u$ if $0 \in \{[v,u]_J\}$.

\begin{definition}[Full orthogonality (FO)]
\label{def:full_orth}
$(u,v)$ are fully orthogonal if $0 \in \{[u,v]_J\}$ and $0 \in \{[v,u]_J\}$.
\end{definition}

\subsection{A semi-inner-product of degree $q$}
A slight generalization of the s.i.p. defined above is a \emph{semi inner product of degree $q$}. 
Essentially the norm and Cauchy-Schwarz properties are raised to the $q$'s power. The formal definition
is as follows.
\begin{definition}[Semi-inner-product of degree $q$ of a convex functional]
\label{def:qsip}
Let $J$ be a convex functional $J : \mathcal{X} \to \mathbb{R}^+ \cup \{ \infty \}$ defined on a Banach
space $\mathcal{X}$ 
A semi-inner-product of degree $q$ on $\mathcal{X}$ is a real function $[u,v]_{J,q}$ on $\mathcal{X} \times \mathcal{X}$ with the properties:
\begin{enumerate}
\item (Linearity in the first argument) $$[u_1 + u_2,v]_{J,q} = s + q,\,
s \in \{ [u_1,v]_{J,q}\},\, q \in \{ [u_2,v]_{J,q}\},$$
\item (Homogeneity in the first argument) $$[\alpha u,v]_{J,q} \in \{ \alpha [u,v]_{J,q} \}, \, \alpha \in \mathbb{R},$$
\item (Functional-inducing) $$[u,u]_{J,q} = J^{2q}(u),$$
\item (Cauchy-Schwarz-type inequality) $$\sqrt{|[u,v]_{J,q} \cdot [v,u]_{J,q}|} \le J^q(u)J^q(v).$$
\end{enumerate}
\end{definition}
We examine more closely the s.i.p. of degree half ($q=\frac{1}{2}$) abbreviated h.s.i.p.
For brevity we denote a special symbol for it
$$ \lfloor u,v \rfloor_J := [u,v]_{J,1/2}. $$
For the h.s.i.p. property 3 in Def. \ref{def:qsip} becomes  $\lfloor u,u \rfloor_J = J(u),$ and property 4 becomes $\lfloor u,v \rfloor_J \lfloor v,u \rfloor_J \le J(u)J(v)$.

In the case of square Hilbert-space functionals, Eq. \eqref{eq:J_H}, we get
$$\lfloor u,v \rfloor_{J_{\mathcal{H}}} = \< u, v \>_{\mathcal{H}} = \frac{[u,v]_{J_\mathcal{H}}}{J_\mathcal{H}(v)}.$$

We will now examine the one-homogeneous case.
\begin{proposition}
\label{th:hsip_1hom}
Let $J$ be a convex one-homogeneous functional and $p(v) \in \partial J(v)$ a subgradient. Then a corresponding semi-inner-product of degree $1/2$ in the sense of Def. \ref{def:qsip} is
\begin{equation}
\label{eq:hsip_1hom}
\lfloor u,v \rfloor_J^{p(v)}:= \<u,p(v)\>.
\end{equation}
\end{proposition}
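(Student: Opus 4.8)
The plan is to verify directly that the candidate expression $\lfloor u,v \rfloor_J^{p(v)} := \langle u,p(v)\rangle$ satisfies all four properties of Definition \ref{def:qsip} with $q=1/2$. The proof should mirror the structure of the proof of Theorem \ref{th:sip_1hom}, since the degree-$1/2$ object is essentially the full s.i.p.\ of \eqref{eq:sip_1hom} with the extra factor $J(v)$ stripped off; indeed the two are related by $[u,v]_J^{p(v)} = \lfloor u,v \rfloor_J^{p(v)}\, J(v)$, and I would note this relation at the outset to reuse the earlier estimates.

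First I would dispatch linearity and homogeneity in the first argument: these are immediate from the bilinearity and real-homogeneity of the $L^2$ inner product $\langle \cdot,\cdot\rangle$ in its first slot, exactly as in Theorem \ref{th:sip_1hom}. Next, for the functional-inducing property (property 3), I would invoke the one-homogeneity identity \eqref{eq:up}, which gives $\lfloor u,u \rfloor_J^{p(u)} = \langle u,p(u)\rangle = J(u) = J^{2q}(u)$ for $q=1/2$, as required.

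The last property to check is the Cauchy--Schwarz-type inequality $\sqrt{|\lfloor u,v \rfloor_J^{p(v)}\cdot \lfloor v,u \rfloor_J^{p(u)}|} \le J^{1/2}(u)J^{1/2}(v)$. Here the key tool is the extended subgradient bound \eqref{eq:subdif1hom}, which states $J(v)\ge |\langle v,p(u)\rangle|$ for every $p(u)\in\partial J(u)$, and symmetrically $J(u)\ge|\langle u,p(v)\rangle|$. Multiplying these two bounds yields
\begin{displaymath}
|\lfloor u,v \rfloor_J^{p(v)}\cdot \lfloor v,u \rfloor_J^{p(u)}| = |\langle u,p(v)\rangle|\,|\langle v,p(u)\rangle| \le J(u)J(v),
\end{displaymath}
and taking square roots gives precisely the degree-$1/2$ inequality. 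I would also remark that, just as in the full-homogeneity case, one in fact obtains the stronger one-sided bound $|\lfloor u,v \rfloor_J^{p(v)}| = |\langle u,p(v)\rangle| \le J(u) \le J^{1/2}(u)J^{1/2}(v)\cdot(J(u)/J(v))^{1/2}$, though only the symmetric product form is demanded by Definition \ref{def:qsip}.

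I do not anticipate a genuine obstacle, since every ingredient is already established in the preliminaries; the only point requiring care is the choice of subgradients. The statement is for a fixed $p(v)$ in the second argument (reflected in the superscript notation), and the Cauchy--Schwarz step implicitly pairs this with some $p(u)\in\partial J(u)$ in $\lfloor v,u\rfloor_J$. I would therefore be explicit that property 4 is asserted for the admissible set, i.e.\ that \emph{there exist} (equivalently, for all) choices of subgradient realizing the bound, so that the inequality holds for the corresponding elements of $\{\lfloor u,v\rfloor_J\}$ and $\{\lfloor v,u\rfloor_J\}$. One may optionally close by observing that the homogeneity in the second argument also transfers, via $p(\alpha v)=\sgn(\alpha)p(v)$ from \eqref{eq:p_alpha}, giving $\lfloor u,\alpha v\rfloor_J^{p(\alpha v)} = \langle u,\sgn(\alpha)p(v)\rangle = \sgn(\alpha)\langle u,p(v)\rangle$, which matches $\{\alpha\lfloor u,v\rfloor_J\}$ only up to the sign normalization inherent in the one-homogeneous setting.
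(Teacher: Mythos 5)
Your proposal is correct and follows essentially the same route as the paper: the paper's proof likewise reduces to the argument of Theorem \ref{th:sip_1hom}, invoking \eqref{eq:up} for the functional-inducing property and \eqref{eq:subdif1hom} for the bounds $|\lfloor u,v \rfloor_J^{p(v)}| \le J(u)$ and $|\lfloor v,u \rfloor_J^{p(u)}| \le J(v)$ whose product gives the Cauchy--Schwarz-type inequality. Your explicit care about which subgradient is paired in $\lfloor v,u\rfloor_J$ is a welcome clarification of a point the paper leaves implicit, but it does not change the argument.
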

\begin{proof}
The proof is mainly similar to the one of Th. \ref{th:sip_1hom}. For the third property we use Eq. \eqref{eq:up}
and for the fourth property, using \eqref{eq:subdif1hom}, we have
$|\lfloor u,v \rfloor_J^{p(v)}| \le J(u)$ and $|\lfloor v,u \rfloor_J^{p(v)}| \le J(v)$.
\end{proof}
Note that the s.i.p. of \eqref{eq:sip_1hom} is simply the h.s.i.p.
multiplied by $J(v)$,
\begin{equation}
\label{eq:sip_hsip}
[u,v]_J^{p(v)} = \lfloor u, v \rfloor_J^{p(v)} J(v).
\end{equation}

Following Eqs. \eqref{eq:up}, \eqref{eq:p_alpha}, \eqref{eq:subdif1hom} we have for the one- homogeneous h.s.i.p. the following properties:
\begin{equation}
\label{eq:hsip_J}
\lfloor u, u \rfloor_J = J(u),
\end{equation}

\begin{equation}
\label{eq:hsip_alpha}
\lfloor u,\alpha v \rfloor_J \in \{ \sgn(\alpha) \lfloor u,v \rfloor_J \}, \qquad \mathbb{R} \ni \alpha \ne 0,
\end{equation}

\begin{equation}
\label{eq:hsip_ineq}
|\lfloor u,v \rfloor_J| \le \lfloor u,u \rfloor_J = J(u), \qquad \forall v \in \mathcal{X}.
\end{equation}

\subsection{Relation to Bregman distance}
We will now show the close connection between the Bregman distance (also called Bregman divergence) and the s.i.p.
in the one-homogeneous case.

Let us first recall the Bregman distance definition \cite{bre67}.
For a convex functional $J$ and a subgradient $p(v) \in \partial J(v)$,
the (generalized) Bregman distance is
\begin{equation}
\label{eq:breg_dist}
D_J^{p(v)}(u,v) := J(u) - J(v) - \langle p(v),u-v \rangle.
\end{equation}
This is not necessarily a distance in the standard sense, as it is not necessarily symmetric and does not admit the triangle inequality, however it is guaranteed to be non-negative and it is identically zero for $u=v$.
For $J$ the square $L^2$ norm we get the Euclidean distance squared,
 $$D_{\|\cdot\|^2}(u,v)=\|u-v\|^2.$$
Other known similarity measures, such as the KL- divergence or the
Mahalanobis distance, can also be derived from \eqref{eq:breg_dist} with appropriate functionals \cite{Clustering_Bregman_div_JMRL2005}.
This measure has been widely used in the theoretical analysis of classification, clustering and convex optimization algorithms, see e.g. \cite{Clustering_Bregman_div_JMRL2005,Censor_multiprojection_Bregman_1994,Metric_learning_JMRL2012,Nemirovski_subgrad_algo_2014}.
Specifically for image processing, a significant branch of studies has
presented iterative variational solutions, new evolution formulations and numerical solvers based on the Bregman distance, especially in relation to
total-variation and other one-homogeneous regularizing functionals
\cite{Bregman_obgxy,iss,SplitBregman2009,Zhang_Burger_Bresson_Osher_NL_Bregman_2010,Ma_Goldfarb_rank_min_2011}, see a recent review of the topic in \cite{Burger_Bregman_IP_2015}.

In the one-homogeneous case we use the relation $J(v)=\langle v,p(v) \rangle$ and the expression in \eqref{eq:breg_dist} simplifies to
\begin{equation}
\label{eq:breg_dist_1hom}
D_J^{p(v)}(u,v)|_{(\textrm{ 1-hom})} = J(u) - \langle p(v),u \rangle.
\end{equation}
It is straightforward in this case to infer the relation to
the s.i.p. and h.s.i.p.,
\begin{equation}
\label{eq:breg_dist_1hom_equiv}
\begin{array}{lll}
D_J^{p(v)}(u,v)|_{(\textrm{ 1-hom})} &= & J(u) - \frac{[u,v]_J^{p(v)}}{J(v)} \\
& = & J(u) - \lfloor u,v \rfloor_J^{p(v)}.
\end{array}
\end{equation}
An interesting interpretation of the Bregman distance is with respect
to the angle between the functions $u$ and $v$,
\begin{equation}
\label{eq:breg_dist_1hom_angle}
D_J^{p(v)}(u,v)|_{(\textrm{ 1-hom})} = J(u)\left(1 - \cos(\textrm{angle}(u,v)) \right),
\end{equation}
with the angle defined in \eqref{eq:angle}.
With this expression we can immediately get the upper and lower bounds
$$ 0 \le D_J^{p(v)}(u,v) \le 2J(u).$$
Moreover, the interpretation of the Bregman distance is of having direct relation to the angle between the functions;
the Bregman distance is zero for zero angle and is monotonically increasing with angle, reaching the maximum at angle$(u,v)=\pi$.

An extension of this relation which applies to the general convex case is not known at this point.
We now define the final notions needed for the decomposition theorem.

\begin{figure}[htb]
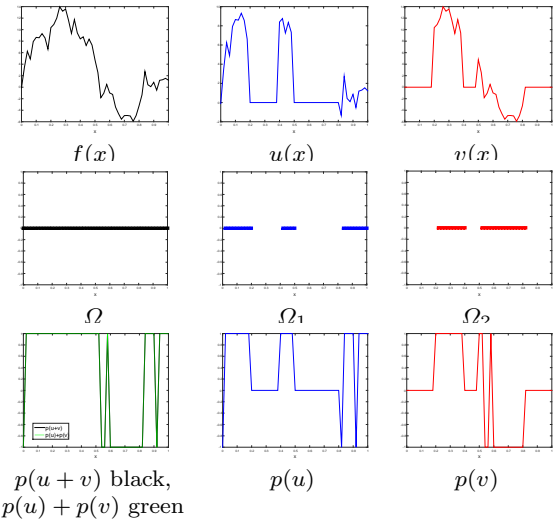

\begin{center}
\begin{tabular}{ ccc }
\includegraphics[width=20mm]{lis_f.eps}&
\includegraphics[width=20mm]{lis_u.eps}&
\includegraphics[width=20mm]{lis_v.eps}\\
$f(x)$ & $u(x)$ & $v(x)$ \\
\includegraphics[width=20mm]{lis_Omega.eps}&
\includegraphics[width=20mm]{lis_Omega1.eps}&
\includegraphics[width=20mm]{lis_Omega2.eps}\\
$\Omega$ & $\Omega_1$ & $\Omega_2$ \\
\includegraphics[width=20mm]{lis_puv1.eps}&
\includegraphics[width=20mm]{lis_pu.eps}&
\includegraphics[width=20mm]{lis_pv.eps}\\
$p(u+v)$ black, & $p(u)$ & $p(v)$ \\
$p(u)+p(v)$ green
\end{tabular}
\caption{LIS example for the $L^1$ norm.}
\label{fig:lis}
\end{center}
\end{figure}

\begin{definition}[Linearity in the subdifferential (LIS)]
\label{def:lin_subdif}
$(u,v)$ are linear in the subdifferential if for any
$\mathbb{R} \ni \{\alpha_1, \alpha_2\} \ne 0$ there exist
$p(\alpha_1 u + \alpha_2 v) \in \p J(\alpha_1 u + \alpha_2 v)$,
$p(\alpha_1 u) \in \p J(\alpha_1 u)$,
$p(\alpha_2 v) \in \p J(\alpha_2 v)$,
such that
\begin{equation}
\label{eq:subdif_lin}
p(\alpha_1 u + \alpha_2 v)  = p(\alpha_1 u) + p(\alpha_2 v).
\end{equation}
\end{definition}
(LIS) implies the h.s.i.p. is linear in the second argument. If the pair $(v_1,v_2)$ admit the (LIS) condition then
there exist 3 subgradient elements $p(\alpha_1 v_1 + \alpha_2 v_2)$, $p(\alpha_1 v_1)$, $p(\alpha_2 v_2)$ such that
for all $u \in \mathcal{X}$ we have
\begin{equation}
\label{eq:LIS_hsip}
\begin{array}{l}
\lfloor u,\alpha_1 v_1 + \alpha_2 v_2 \rfloor_J^{p(\alpha_1 v_1 + \alpha_2 v_2)}  \\
= \lfloor u,\alpha_1 v_1 \rfloor_J^{p(\alpha_1 v_1)} + \lfloor u, \alpha_2 v_2 \rfloor_J^{p(\alpha_2 v_2)}.
\end{array}
\end{equation}
This is shown by writing the left-hand-side, according to \eqref{eq:hsip_1hom}, as
$\langle u,p(\alpha_1 v_1 + \alpha_2 v_2) \rangle$
and using \eqref{eq:subdif_lin}.

We give a simple example of two signals admitting (LIS) in the case of $J$ being the $L^1$ norm, for the 1D case within the unit interval $\Omega=[0,1]$.
Let $f(x)$ be a real function in $\Omega$,  $f:\Omega \to \mathbb{R}$. We define the following two functions:
$u(x)=f(x)$ if $x \in [0,0.5)$ and 0 otherwise, $v(x)=f(x)$ if $x \in [0.5,1]$ and 0 otherwise.
Then it can be verified that $u$ and $v$ are (LIS). Any other partition $\Omega_1 \subset \Omega$, $\Omega_2 = \Omega \setminus \Omega_1$ for $u$ and $v$
will produce similar results, see Fig. \ref{fig:lis}.


\begin{definition}[Independent functions]
\label{def:independent}
$(u,v)$ are independent functions if they are fully orthogonal (FO) and linear in the subdifferential (LIS),
according to Def. \ref{def:full_orth} and Def. \ref{def:lin_subdif}, respectively.
\end{definition}

We shall now show that for one-homogeneous functionals, all functions which are (LIS) are also (FO) and are therefore independent.

\begin{proposition}
\label{prop:indep}
Let $J$ be a convex one-homogeneous functional. If the pair $(u,v)$ are (LIS) according to Def. \ref{def:lin_subdif}, then $(u,v)$ are (FO) and therefore are independent (Def. \ref{def:independent}).
\end{proposition}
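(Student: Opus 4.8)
The plan is to exhibit, for each of the two ordered pairs, a subgradient of the second argument that annihilates the first, so that $0$ lies in both $\{[u,v]_J\}$ and $\{[v,u]_J\}$ as required by Def.~\ref{def:full_orth}. The (LIS) hypothesis supplies additive subgradients, and the freedom in the signs of $\alpha_1,\alpha_2$ will let me produce subgradients on opposite sides of zero; convexity of the subdifferential then closes the gap.

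First I would instantiate Def.~\ref{def:lin_subdif} at $\alpha_1=\alpha_2=1$, obtaining $p_+:=p(u+v)=p_a(u)+p_a(v)$ with $p_a(u)\in\partial J(u)$ and $p_a(v)\in\partial J(v)$. Since $p_+\in\partial J(u+v)$, applying the magnitude bound \eqref{eq:subdif1hom} to the test function $u$ and using $\langle u,p_a(u)\rangle=J(u)$ from \eqref{eq:up} gives
$$ J(u)\ \ge\ |\langle u,p_+\rangle|\ =\ |J(u)+\langle u,p_a(v)\rangle|, $$
which forces $\langle u,p_a(v)\rangle\le 0$. Next I would instantiate at $\alpha_1=1,\ \alpha_2=-1$; by \eqref{eq:p_alpha} the subgradient of $-v$ is $-p_b(v)$ for some $p_b(v)\in\partial J(v)$, so $p_-:=p(u-v)=p_b(u)-p_b(v)$, and the same manipulation yields $J(u)\ge|J(u)-\langle u,p_b(v)\rangle|$, hence $\langle u,p_b(v)\rangle\ge 0$.

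Now both $p_a(v)$ and $p_b(v)$ belong to the convex set $\partial J(v)$, and the linear functional $p\mapsto\langle u,p\rangle$ is nonpositive at $p_a(v)$ and nonnegative at $p_b(v)$. By convexity of $\partial J(v)$ and the intermediate value theorem along the segment joining them, there is $p^\ast(v)\in\partial J(v)$ with $\langle u,p^\ast(v)\rangle=0$; by \eqref{eq:sip_1hom} this gives $[u,v]^{p^\ast(v)}_J=\langle u,p^\ast(v)\rangle J(v)=0$, so $0\in\{[u,v]_J\}$. Testing $p_+$ and $p_-$ against $v$ instead of $u$ runs the identical argument inside $\partial J(u)$ and produces $p^\ast(u)\in\partial J(u)$ with $\langle v,p^\ast(u)\rangle=0$, i.e. $0\in\{[v,u]_J\}$. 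Hence $(u,v)$ are fully orthogonal, and together with the assumed (LIS) this is precisely independence in the sense of Def.~\ref{def:independent}.

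The hard part, and the only non-cosmetic step, is that a single (LIS) instance delivers merely the one-sided estimate $\langle u,p_a(v)\rangle\le 0$ rather than an equality; the resolution is to flip the sign of $\alpha_2$ to manufacture a second subgradient on the opposite side of zero and then invoke convexity of $\partial J(v)$. I would also keep in mind that Def.~\ref{def:full_orth} permits the two vanishing conditions to be witnessed by different subgradients, which is what allows $p^\ast(u)$ and $p^\ast(v)$ to be chosen independently.
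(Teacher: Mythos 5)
Your proof is correct and follows the same skeleton as the paper's: instantiate (LIS) at $\alpha_1=\alpha_2=1$ and at $\alpha_1=1,\alpha_2=-1$, pair the resulting subgradients of $u\pm v$ against $u$ via \eqref{eq:subdif1hom} and \eqref{eq:up}, and extract the two one-sided inequalities. The one genuine difference is that you do not assume the two (LIS) instances deliver the \emph{same} subgradient of $v$: you obtain $\langle u,p_a(v)\rangle\le 0$ and $\langle u,p_b(v)\rangle\ge 0$ for possibly distinct $p_a(v),p_b(v)\in\partial J(v)$ and then interpolate along the (convex) subdifferential to find $p^\ast(v)$ with $\langle u,p^\ast(v)\rangle=0$, which suffices for $0\in\{[u,v]_J\}$ under Def.~\ref{def:full_orth}. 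The paper's proof writes the same symbol $p(v)$ in both inequalities and concludes $\lfloor u,v\rfloor_J^{p(v)}=0$ directly, which tacitly assumes the two instances can be witnessed by one common subgradient; your convexity-plus-intermediate-value step closes that gap cleanly at essentially no extra cost, and the symmetric treatment of $(v,u)$ is handled correctly. No issues.
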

\begin{proof}
From \eqref{eq:hsip_ineq} we have
$$ J(u) \ge \lfloor  u, u+v \rfloor_J, $$
using (LIS), for some fixed subgradients $p(u+v)$, $p(u)$, $p(v)$, we have
\begin{displaymath}
\begin{array}{lll}
 J(u) \ge \lfloor u, u+v \rfloor_J^{p(u+v)} & = &\lfloor u, u \rfloor_J^{p(u)} + \lfloor u, v \rfloor_J^{p(v)}\\
  &=& J(u) + \lfloor u, v \rfloor_J^{p(v)}.$$
\end{array}
\end{displaymath}
We therefore have $\lfloor u, v \rfloor_J^{p(v)} \le 0$.
On the other hand, taking $\alpha_1=1,\alpha_2=-1$ in Def. \ref{def:lin_subdif} we get that
also $u,-v$ are (LIS). In this case, using \eqref{eq:hsip_alpha}, we reach $\lfloor u, v \rfloor_J^{p(v)} \ge 0$.
We can conclude that $\lfloor u, v \rfloor_J^{p(v)} = 0$ hence
$$\{ [u,v]_J \} \ni  \lfloor u, v \rfloor_J^{p(v)} J(v) = 0.$$
The same arguments hold for the pair $(v,u)$.
\end{proof}

An interesting characteristic of independent functions is that they reach the upper
bound of the triangle inequality (Eq. \eqref{eq:triangle}).
\begin{proposition}
\label{prop:indep_tri}
Let $J$ be a convex one-homogeneous functional. If $(u,v)$ are independent (Def. \ref{def:independent})
then $J(u+v) = J(u) + J(v)$.
\end{proposition}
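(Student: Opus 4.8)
The plan is to obtain the identity $J(u+v)=J(u)+J(v)$ by a direct expansion of $J(u+v)$ rather than by separately proving the two inequalities; independence will make every cross term vanish, so that the triangle inequality \eqref{eq:triangle} is saturated exactly. First I would invoke the one-homogeneous identity \eqref{eq:up}, which holds for \emph{any} subgradient, to write $J(u+v)=\langle u+v,p(u+v)\rangle$ for a subgradient $p(u+v)\in\p J(u+v)$ of my choosing. The freedom in this choice is exactly what lets me bring in the (LIS) hypothesis.

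Next I would use the (LIS) condition of Def. \ref{def:lin_subdif} with $\alpha_1=\alpha_2=1$, which furnishes subgradients $p(u)\in\p J(u)$, $p(v)\in\p J(v)$ and $p(u+v)\in\p J(u+v)$ satisfying $p(u+v)=p(u)+p(v)$. Substituting this into $\langle u+v,p(u+v)\rangle$ and using bilinearity of the $L^2$ inner product, I would split
\begin{displaymath}
J(u+v)=\langle u,p(u)\rangle+\langle v,p(v)\rangle+\langle u,p(v)\rangle+\langle v,p(u)\rangle .
\end{displaymath}
The first two terms equal $J(u)$ and $J(v)$ respectively by \eqref{eq:up}. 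The two remaining cross terms are precisely $\lfloor u,v\rfloor_J^{p(v)}$ and $\lfloor v,u\rfloor_J^{p(u)}$ in the notation of \eqref{eq:hsip_1hom}, and both vanish by full orthogonality, giving $J(u+v)=J(u)+J(v)$.

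The one delicate point — which I expect to be the only real obstacle — is that (FO) as stated in Def. \ref{def:full_orth} merely asserts the \emph{existence} of subgradients for which the relevant inner products vanish, whereas the expansion above forces me to use the specific subgradients $p(u),p(v)$ delivered by (LIS). I would resolve this by appealing to Proposition \ref{prop:indep}: its proof establishes $\lfloor u,v\rfloor_J^{p(v)}=0$ and $\lfloor v,u\rfloor_J^{p(u)}=0$ using exactly the subgradients arising from the (LIS) decomposition, so the cross terms vanish for the very same $p(u),p(v)$ appearing in $p(u+v)=p(u)+p(v)$. With the subgradients thus matched, the computation closes and no appeal to \eqref{eq:triangle} is needed, though the result confirms that independent functions attain its upper bound.
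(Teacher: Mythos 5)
Your proof is correct and follows essentially the same route as the paper's: expand $J(u+v)=\langle u+v,p(u+v)\rangle$ via (LIS), identify the diagonal terms with $J(u)$ and $J(v)$, and kill the cross terms $\lfloor u,v\rfloor_J^{p(v)}$, $\lfloor v,u\rfloor_J^{p(u)}$ by (FO). Your extra remark about matching the (FO) subgradients to those furnished by the (LIS) decomposition, resolved via the proof of Proposition~\ref{prop:indep}, addresses a point the paper leaves implicit and is a welcome clarification.
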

\begin{proof}
$$
\begin{array}{ll}
J(u+v) & =\lfloor u+v,u+v \rfloor_J^{p(u+v)} \\
       & \stackrel{\text{LIS}}{=} \lfloor u,u \rfloor_J^{p(u)} + \lfloor u,v \rfloor_J^{p(v)} + \lfloor v,u \rfloor_J^{p(u)}  + \lfloor v,v \rfloor_J^{p(v)} \\
       &\stackrel{\text{FO}}{=} \lfloor u,u \rfloor_J^{p(u)}  + \lfloor v,v \rfloor_J^{p(v)}  \\
       &=J(u)+J(v).
\end{array}
$$
\end{proof}



\subsection{S.I.P. for eigenfunctions}
For eigenfunctions, which admit \eqref{eq:eigenfunction}, things simplify considerably, where an element in the s.i.p. is basically
a weighted $L^2$ inner product.

For $\lambda u \in \p J(u)$ we get, as in \eqref{eq:lam_1hom},
$$ J(u)  =  \< u,p(u) \>  = \< u,\lambda u \> = \lambda \| u \|^2,$$
where $\|\cdot\|$ is the $L^2$ norm.
For semi-inner-products we will often use the subgradient element corresponding
to the eigenfunction, this will be denote by a superscript $\lambda_v v$.
We therefore have the following relations for the s.i.p and h.s.i.p:
For the s.i.p., for any $u \in \mathcal{X}$, $\lambda_v v \in \p J(v)$,
\begin{equation}
\label{eq:sip_ef}
\lambda_v^2 \< u,v \>\|v\|^2 = [u,v]_J^{\lambda_v v} \in \{ [u,v]_J \},
\end{equation}
and for the h.s.i.p.,
\begin{equation}
\label{eq:hsip_ef}
\lambda_v \< u,v \> = \lfloor u,v \rfloor_J^{\lambda_v v} \in \{ \lfloor u,v \rfloor_J \}.
\end{equation}

Another consequence is related to orthogonality.
\begin{proposition}
\label{prop:ef_ortho}
\begin{enumerate}
\item For any $u \in \mathcal{X}$, $\lambda_v v \in \p J(v)$, $\lambda_v>0$, $\|v\|>0$,
$$[u,v]_J^{\lambda_v v}= 0 \,\, \textrm{ iff } \,\, \<u,v \>=0.$$
\item For $p(u)=\lambda_u u \in \p J(u)$, $p(v)=\lambda_v v \in \p J(v)$, $\lambda_u,\lambda_v>0$,
$\|u\|,\|v\|>0$,
the following statements are identical:
\begin{enumerate}
    \item $[u,v]_J^{\lambda_v v} = 0$,
    \item $[v,u]_J^{\lambda_u u} = 0$,
    \item $\lfloor u,v \rfloor_J^{\lambda_v v} = 0$,
    \item $\lfloor v,u \rfloor_J^{\lambda_u u} = 0$,
    \item $\lfloor u,v \rfloor_J^{\lambda_v v} + \lfloor v,u \rfloor_J^{\lambda_u u} = 0$,
    \item $\< u,v \> = 0$.
\end{enumerate}
\end{enumerate}
\end{proposition}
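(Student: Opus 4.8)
The plan is to reduce every quantity appearing in the statement to a scalar multiple of the ordinary $L^2$ inner product $\<u,v\>$, exploiting the explicit eigenfunction formulas \eqref{eq:sip_ef} and \eqref{eq:hsip_ef} derived just above. The whole proposition then follows from a single observation: under the stated positivity hypotheses each of these scalar multipliers is strictly positive, so the vanishing of any of the products is equivalent to the vanishing of $\<u,v\>$ itself.

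For part 1, I would simply substitute \eqref{eq:sip_ef} to write $[u,v]_J^{\lambda_v v} = \lambda_v^2 \<u,v\>\|v\|^2$. Since $\lambda_v>0$ and $\|v\|>0$, the factor $\lambda_v^2\|v\|^2$ is strictly positive, and therefore $[u,v]_J^{\lambda_v v}=0$ holds if and only if $\<u,v\>=0$.

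For part 2, the plan is to expand each of the six statements using \eqref{eq:sip_ef}, \eqref{eq:hsip_ef} and the symmetry $\<u,v\>=\<v,u\>$ of the $L^2$ product, obtaining in turn $\lambda_v^2\|v\|^2\<u,v\>$ for (a), $\lambda_u^2\|u\|^2\<u,v\>$ for (b), $\lambda_v\<u,v\>$ for (c), $\lambda_u\<u,v\>$ for (d), and $(\lambda_u+\lambda_v)\<u,v\>$ for (e), while (f) is $\<u,v\>$ itself. Each prefactor is a strictly positive real number by the assumptions $\lambda_u,\lambda_v>0$ and $\|u\|,\|v\|>0$ (and because a sum of positive numbers is positive, for (e)). Hence every one of the six expressions vanishes precisely when $\<u,v\>=0$, which establishes the mutual equivalence.

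The argument is essentially a one-line computation once the eigenfunction formulas are in place, so there is no substantial analytic obstacle. The only point deserving attention is that the strict positivity hypotheses on the eigenvalues and the norms are exactly what is needed: they guarantee the proportionality constants are nonzero and thus prevent any equivalence from degenerating (for instance, if $\lambda_v=0$ were permitted, (c) would hold trivially and would no longer force $\<u,v\>=0$). I would therefore make explicit in the writeup that these hypotheses enter precisely to keep every multiplier strictly positive.
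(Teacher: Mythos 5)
Your proposal is correct and follows the paper's own argument essentially verbatim: the paper likewise reduces each of the six quantities to $\<u,v\>$ times a strictly positive factor via Eqs.~\eqref{eq:sip_ef} and \eqref{eq:hsip_ef} and concludes the equivalences from there. Your added remark on why the strict positivity hypotheses are exactly what is needed is a nice clarification but does not change the route.
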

\begin{proof}
The first part is an immediate consequence of Eq. \eqref{eq:sip_ef}.
For the second part, let us write the equivalent of statements (a) through (e):
\begin{itemize}
    \item[(A)] $[u,v]_J^{\lambda_v v} = \lambda_v^2 \< u,v \>\|v\|^2 $,
    \item[(B)] $[v,u]_J^{\lambda_u u} = \lambda_u^2 \< v,u \>\|u\|^2 $,
    \item[(C)] $\lfloor u,v \rfloor_J^{\lambda_v v} = \lambda_v \< u,v \>$,
    \item[(D)] $\lfloor v,u \rfloor_J^{\lambda_u u} = \lambda_u \< v,u \>$,
    \item[(E)] $\lfloor u,v \rfloor_J^{\lambda_v v} + \lfloor v,u \rfloor_J^{\lambda_u u} = (\lambda_v + \lambda_u) \< u,v \>$.
\end{itemize}
We observe that in the case where both $u$ and $v$ are eigenfunctions all expressions reduce to the $L^2$ inner product up to a strictly positive multiplicative
factor and are therefore identical when $\< u,v \> = 0$.
\end{proof}

\section{Decomposition}

Let $f_1, f_2$ be two functions in $\mathcal{X}$ and $f = f_1 + f_2$.
Naturally a decomposition from a single measurement $f$ into two signals $f_1$ and $f_2$ is not possible in general.
One should use some a priori knowledge and assumptions on the signals (depicted in the choice of the regularizer $J$).
A classical decomposition problem is how and under what conditions we can decompose $f$ into $f_1$ and $f_2$.
This issue is significant in signal processing, for instance when $f_1$ is the signal and $f_2$ is noise or for
structure-texture decomposition, where $f_1$ is structure and $f_2$ is texture (assumed to be additive).
We will try to give an answer to this using the spectral filtering technique and conditions from the above framework.

\off{
We first prove the following lemma,
\begin{lemma}
If $f_1, f_2$ are (LIS) then
$$ \lfloor f_1,f_2 \rfloor_J + \lfloor f_2,f_1 \rfloor_J = 0.$$
\end{lemma}
\begin{proof}
Let $A := \lfloor f_1,f_2 \rfloor_J + \lfloor f_2,f_1 \rfloor_J = \<f_1,p(f_2)\> + \<f_2,p(f_1)\> $.
From the linearity in the subdifferential we have that
\begin{displaymath}
\begin{array}{ll}
J(f_1+f_2) &= \< f_1+f_2,p(f_1+f_2)\> \\
& = \< f_1+f_2,p(f_1)+p(f_2)\> \\
& = J(f_1) + J(f_2) + A.
\end{array}
\end{displaymath}
Using the triangle inequality, Eq. \eqref{eq:triangle}, we have $A \le 0$.
Choosing in Def. \ref{def:lin_subdif} $\alpha_1=-1$, $\alpha_2=1$ we have that
$-f_1$ and $f_2$ are also (LIS). And therefore
(using \eqref{eq:p_alpha}), we have
\begin{displaymath}
\begin{array}{ll}
J(-f_1+f_2) &= \< -f_1+f_2,p(-f_1+f_2)\> \\
& = \< -f_1+f_2,p(-f_1)+p(f_2)\> \\
& = J(f_1) + J(f_2) - A,
\end{array}
\end{displaymath}
which leads to $A \ge 0$, hence $A=0$.
\end{proof}

We now restrict ourselves to the case when $f_1$ and $f_2$ are eigenfunctions,
\begin{equation}
\label{eq:f1f2_ef}
p(f_1) = \lambda_1 f_1, \,\, p(f_2) = \lambda_2 f_2.
\end{equation}

\begin{corollary}
For eigenfunctions $f_1$, $f_2$ which are (LIS) we get $\<f_1,f_2\> = 0$.
\end{corollary}
This directly follows from the above Lemma and using the fifth and sixth identities of Prop. \ref{prop:ef_ortho}.

\begin{corollary}
Eigenfunctions $f_1$, $f_2$ which are (LIS) are independent (Def. \ref{def:independent}).
\end{corollary}
This follows from the previous corollary and  Def. \ref{def:independent}.
} 

We can now state a sufficient condition for spectral filtering to perfectly decompose $f$ into $f_1$ and $f_2$.

\begin{theorem}
\label{th:decomp}
If $f_1,f_2$ are eigenfunctions with corresponding eigenvalues $\lambda_1,\lambda_2$, with $\lambda_1<\lambda_2$, independent in the sense of Def. \ref{def:independent}, then
$f=f_1+f_2$ can be perfectly decomposed into $f_1$ and $f_2$ using the following spectral decomposition:
$f_1 = LPF_{\frac{1}{\lambda_c}}(f)$, $f_2 = HPF_{\frac{1}{\lambda_c}}(f)$ with $\lambda_1 < \lambda_c < \lambda_2$.
\end{theorem}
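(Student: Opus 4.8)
The plan is to reduce the gradient-flow evolution of the mixed signal $f=f_1+f_2$ to a superposition of the two known single-eigenfunction flows, and then read off the spectral components $\phi(t)$ directly. The key leverage is that independence (Def.~\ref{def:independent}) packages together exactly the two ingredients one needs: (LIS) lets subgradients add, and (FO)/orthogonality (via Prop.~\ref{prop:indep} and Prop.~\ref{prop:ef_ortho}) guarantees the two flows do not interfere.

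\textbf{Step 1: Construct the flow by superposition.}
First I would propose the ansatz
\begin{equation}
\label{eq:ansatz}
u(t) = (1-\lambda_1 t)^+ f_1 + (1-\lambda_2 t)^+ f_2,
\end{equation}
built from the two single-eigenfunction solutions of Eq.~\eqref{eq:u_ef}. I would then verify it solves the gradient flow \eqref{eq:grad_flow} with $u(0)=f$. Away from the kink times $t=1/\lambda_1,\,1/\lambda_2$ each summand evolves as $-\lambda_i f_i$; the candidate subgradient is $p(t) = \mathrm{sgn}(1-\lambda_1 t)\lambda_1 f_1 + \mathrm{sgn}(1-\lambda_2 t)\lambda_2 f_2$. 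The crux here is showing $p(t)\in\p J(u(t))$, and this is exactly where (LIS) enters: since $f_1,f_2$ are (LIS), the scaled pair remains (LIS) (Def.~\ref{def:lin_subdif} with the appropriate $\alpha_1,\alpha_2$), so the sum of the individual subgradients $\lambda_1 f_1$ and $\lambda_2 f_2$ (with their signs, via \eqref{eq:p_alpha}) is again a legitimate subgradient of the sum through \eqref{eq:subdif_lin}.

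\textbf{Step 2: Compute $\phi(t)$ and integrate against the filters.}
Given \eqref{eq:ansatz}, applying the transform definition $\phi(t)=t\,\p_{tt}u(t)$ from \eqref{eq:phi} yields, as in \eqref{eq:phi_ef}, two Dirac spikes:
\begin{equation}
\label{eq:phi_decomp}
\phi(t) = \delta\!\left(t-\tfrac{1}{\lambda_1}\right) f_1 + \delta\!\left(t-\tfrac{1}{\lambda_2}\right) f_2.
\end{equation}
Because $\lambda_1<\lambda_2$ the spikes sit at $t=1/\lambda_1 > 1/\lambda_2$, so choosing any $\lambda_c$ with $\lambda_1<\lambda_c<\lambda_2$ places the cutoff $t_c=1/\lambda_c$ strictly between the two spike locations. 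Substituting \eqref{eq:phi_decomp} into the low-pass filter \eqref{eq:lpf} (integration over $[t_c,\infty)$ captures only the spike at $1/\lambda_1$) gives $f_1$, and into the high-pass filter \eqref{eq:hpf} (integration over $[0,t_c)$ captures only the spike at $1/\lambda_2$) gives $f_2$, completing the decomposition.

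\textbf{Main obstacle.}
The genuinely delicate step is \emph{Step 1}: verifying that the superposed profile \eqref{eq:ansatz} is a bona fide gradient-flow solution, rather than merely a plausible guess. Two subtleties arise. The first is regularity at the kink times $t=1/\lambda_i$, where one summand switches off; one must check the flow is still satisfied in the appropriate (weak / subdifferential inclusion) sense and that $\p_{tt}u$ produces clean Dirac masses with no cross terms. The second, more structural, is that the subgradient additivity from (LIS) is only asserted to hold for \emph{some} choice of subgradients; I would need the eigenfunction subgradients $\lambda_i f_i$ to be a valid such choice along the whole flow, invoking independence together with orthogonality $\langle f_1,f_2\rangle=0$ (which Prop.~\ref{prop:ef_ortho} ties to full orthogonality) to rule out interaction. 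Once this consistency is secured, Step 2 is essentially a bookkeeping exercise on Dirac masses.
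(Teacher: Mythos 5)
Your proof follows essentially the same route as the paper's: the identical ansatz $u(t)=(1-\lambda_1 t)^+f_1+(1-\lambda_2 t)^+f_2$ (Eq.~\eqref{eq:u_f1f2}), verified as a gradient-flow solution via (LIS) together with \eqref{eq:p_alpha} and \eqref{eq:eigenfunction}, followed by reading off the two Dirac spikes and integrating against the low/high-pass filters. One minor slip worth fixing: for $1/\lambda_2 \le t < 1/\lambda_1$ the second summand has already switched off, so its subgradient contribution should simply be dropped rather than carried with a factor $\sgn(1-\lambda_2 t)=-1$; with that correction your argument matches the paper's proof.
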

\begin{proof}
The theme of the proof is to show that we get an additive spectral response
$$ \phi(t,x) = \delta(t-1/\lambda_1)f_1(x) +  \delta(t-1/\lambda_2)f_2(x)$$
and therefore the spectral filtering proposed above (Eqs. \eqref{eq:phi_ef},\eqref{eq:lpf}, \eqref{eq:hpf} ,which hold for the general one-homogeneous case) decomposes $f$ correctly.

We examine the gradient flow \eqref{eq:grad_flow} with initial conditions $f=f_1+f_2$.
Let us show that given the above assumptions the solution is
\begin{equation}
\label{eq:u_f1f2}
u(t,x) = (1-\lambda_1 t)^+ f_1(x) + (1-\lambda_2 t)^+ f_2(x).
\end{equation}
It is easy to see that for \eqref{eq:u_f1f2} the first time derivative is
$$\partial_t u(t,x) = \left\{
\begin{array}{ll}
-\lambda_1 f_1(x) - \lambda_2 f_2(x),& 0 \le t < 1/\lambda_2\\
-\lambda_1 f_1(x) - \lambda_2 f_2(x),& 1/\lambda_2 \le t < 1/\lambda_1\\
0,& 1/\lambda_1 \le t \\
\end{array}
\right.$$

We now need to check the subdifferential. We do this for $0 \le t < 1/\lambda_2$, similar results can be shown for the other time intervals.
We denote by $p(\cdot)$ an element in $\p J(\cdot)$.

\begin{displaymath}
\begin{array}{lll}
\p J(u(t)) &=& \p J\left((1-\lambda_1 t)f_1(x) + (1-\lambda_2 t)f_2(x)\right) \nonumber \\
&\stackrel{\text{LIS}}{\ni}&  p((1-\lambda_1 t)f_1(x)) + p((1-\lambda_2 t)f_2(x)) \nonumber \\
&\stackrel{\text{Eq.\eqref{eq:p_alpha}}}{=}& p(f_1(x)) + p(f_2(x))  \nonumber \\
&\stackrel{\text{Eq.\eqref{eq:eigenfunction}}}{=}& \lambda_1 f_1(x) + \lambda_2 f_2(x)  \nonumber \\
&=& -\partial_t u(t,x).
\end{array}
\end{displaymath}

\off{
It is easy to show that by the linearity in the subdifferntial we have for $u$ of \eqref{eq:u_f1f2}
\begin{equation}
\label{eq:pu_f1f2}
p(u) = p\left( \alpha_1 f_1 + \alpha_2 f_2\right) = p(\alpha_1 f_1) + p(\alpha_2 f_2),
\end{equation}
where $\alpha_1 = (1-\lambda_1 t)^+$, $\alpha_2 = (1-\lambda_2 t)^+$.
Moreover, for the functional $J(u)$ we have
\begin{equation}
\label{eq:J_f1f2}
\begin{array}{ll}
J(u) & = J( \alpha_1 f_1 + \alpha_2 f_2) \\
&= \< \alpha_1 f_1 + \alpha_2 f_2, p(\alpha_1 f_1 + \alpha_2 f_2)\>,\\
&= \< \alpha_1 f_1 + \alpha_2 f_2, p(\alpha_1 f_1) + p(\alpha_2 f_2)\>\\
&= \< \alpha_1 f_1,p(\alpha_1 f_1)\>  + \< \alpha_2 f_2, p(\alpha_2 f_2)\>.
\end{array}
\end{equation}
SHOULD GO OVER PROOF AND EXPLAIN.
}
\end{proof}

We can conclude that two eigenfunctions with different eigenvalues which are independent, with respect to the regularizer $J$, can be perfectly
decomposed using spectral decomposition based on $J$.


\subsection{Decomposition measures}  
The conditions stated in the above theorem are somewhat strict.
We would like to have a soft measure for the independence of two signals which attains the value $1$ for
completely independent signals (in the sense of Def. \ref{def:independent}) and $0$ for completely correlated signals.
It is expected that this measure will indicate how well two signals can be decomposed.


\subsection{Orthogonality measure}
Let an orthogonality indicator be defined by
\begin{equation}
\label{eq:orth}
\mathcal{O}(u,v) = 1 - \frac{\sqrt{|[u,v]_J[v,u]_J|}}{J(u)J(v)}.
\end{equation}
We have that $0 \le \mathcal{O}(u,v) \le 1$ and $\mathcal{O}=1$ in the orthogonal case, if either $[u,v]_J=0$ or $[v,u]_J=0$.
For the fully correlated case $v=au$, $a >0$, we get $\mathcal{O}(u,au)=0$.
\off{
Let a triangle inequality indicator be defined by
\begin{equation}
\label{eq:tri}
\mathcal{T}(u,v) = \frac{J(u+v)}{J(u)+J(v)}.
\end{equation}
Here also we have $0 \le \mathcal{T}(u,v) \le 1$ and $\mathcal{T}=1$ when the triangle inequality reaches equality.

We show that linearity in the subdif yields that both $\mathcal{O}=1$ and $\mathcal{T}=1$.
Show... (proposition?)

Therefore we suggest the following independence measure $\mathcal{I}(\cdot,\cdot)$:
\begin{equation}
\label{eq:ind}
\mathcal{I}(u,v) = \mathcal{O}(u,v)\cdot\mathcal{T}(u,v).
\end{equation}
Naturally from the bounds on $\mathcal{O}$ and $\mathcal{T}$ we have $0 \le \mathcal{I}(u,v) \le 1$
and for $u,v$ l.i.s. we reach the maximum $\mathcal{I}=1$ (at this point we do not obtain a proof
that this is the only case).
}

\subsection{LIS measure}
Here a more direct relation to the (LIS) property is defined.
We measure how different is $p(u+v)$ from $p(u)+p(v)$. This is done in terms of h.s.i.p.,
\begin{equation}
\label{eq:E}
\begin{array}{lll}
E(u,v) &:=& \lfloor u+v,u \rfloor_J + \lfloor u+v,v \rfloor_J - \lfloor u+v,u+v \rfloor_J\\
& = & \< u+v,p(u)+p(v)-p(u+v)\>.
\end{array}
\end{equation}
We show below that $E \le J(u+v)$. Also we have that $E \to 0$ as $p(u)+p(v) \to p(u+v)$. A possible indicator $L$ for the (LIS) property can therefore be
\begin{equation}
\label{eq:LIS_ind}
L(u,v) := 1-\frac{|E(u,v)|}{J(u+v)}.
\end{equation}
Let us show that
$$E(u,v) \le J(u+v).$$
From \eqref{eq:hsip_ineq} we have
$\lfloor u+v,u \rfloor_J \le \lfloor u+v,u+v \rfloor_J$ and
$\lfloor u+v,v \rfloor_J \le \lfloor u+v,u+v \rfloor_J$,
where $\lfloor u+v,u+v \rfloor_J = J(u+v)$.
Note also that for the fully correlated case, $v=au$, $a >0$, we get $L(u,au)=0$.

\begin{figure}[htb]
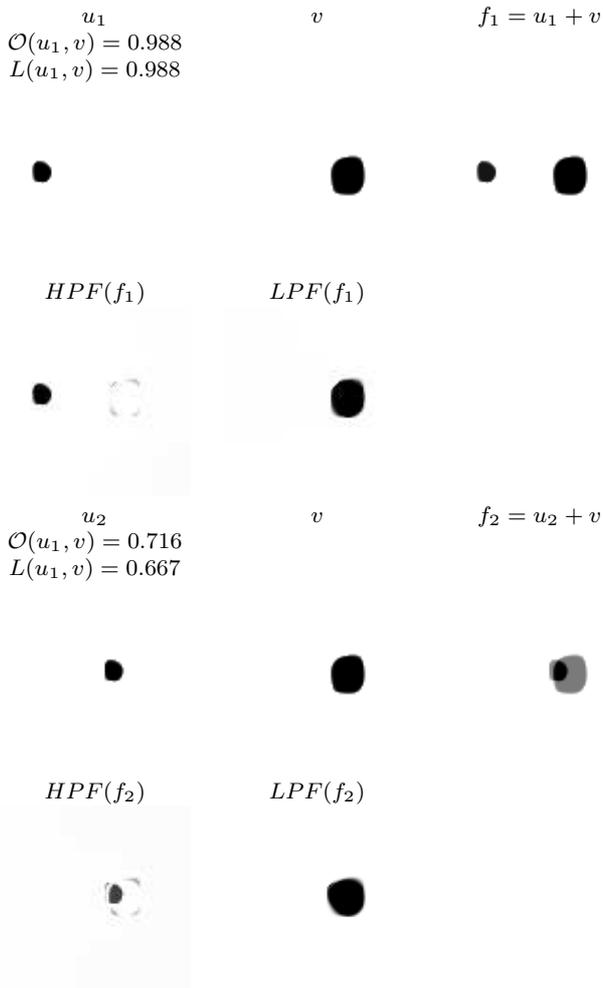

\begin{center}
\begin{tabular}{ ccc }
$u_1$ & $v$ & $f_1=u_1+v$\\
$\mathcal{O}(u_1,v)=0.988$\\
$L(u_1,v)=0.988$\\
\includegraphics[width=25mm, clip]{blobs_u.eps}&
\includegraphics[width=25mm, clip]{blobs_v.eps}&
\includegraphics[width=25mm, clip]{blobs_f.eps}\\
$HPF(f_1)$ & $LPF(f_1)$ \\
\includegraphics[width=25mm, clip]{blobs_f_H.eps}&
\includegraphics[width=25mm, clip]{blobs_res.eps}\\
$u_2$ & $v$ & $f_2=u_2+v$ \\
$\mathcal{O}(u_1,v)=0.716$ \\
$L(u_1,v)=0.667$\\
\includegraphics[width=25mm, clip]{blobs_u2.eps}&
\includegraphics[width=25mm, clip]{blobs_v.eps}&
\includegraphics[width=25mm, clip]{blobs_f2.eps}\\
$HPF(f_2)$ & $LPF(f_2)$ \\
\includegraphics[width=25mm, clip]{blobs_f_H2.eps}&
\includegraphics[width=25mm, clip]{blobs_res2.eps}\\
\end{tabular}
\caption{ Separating blobs of different scale using spectral filtering.
 }
\label{fig:blobs}
\end{center}
\end{figure}

\begin{figure}[htb]
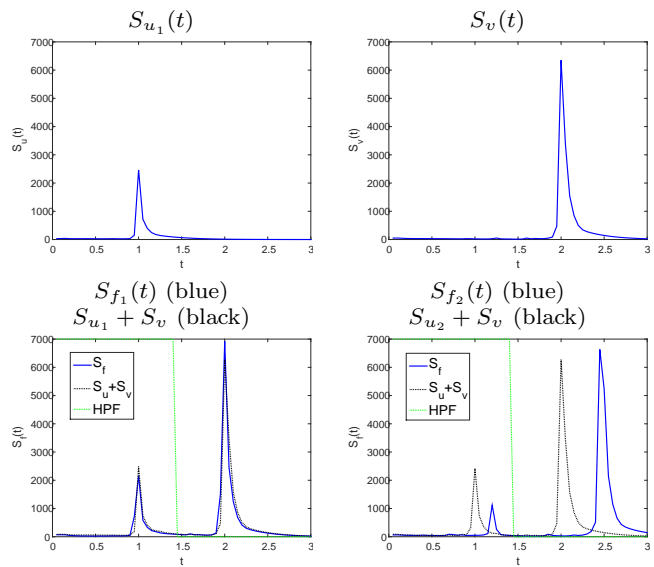

\begin{center}
\begin{tabular}{ cc }
$S_{u_1}(t)$ & $S_v(t)$ \\
\includegraphics[width=40mm, clip]{blobs_Su.eps}&
\includegraphics[width=40mm, clip]{blobs_Sv.eps}\\
$S_{f_1}(t)$ (blue)  & $S_{f_2}(t)$ (blue)\\
$S_{u_1}+S_v$ (black)  & $S_{u_2}+S_v$ (black)\\
\includegraphics[width=40mm, clip]{blobs_Sf1.eps}&
\includegraphics[width=40mm, clip]{blobs_Sf2.eps}\\
\end{tabular}
\caption{ Spectra of the different blob signals.
 }
\label{fig:blobs_S}
\end{center}
\end{figure}

\begin{figure}[htb]
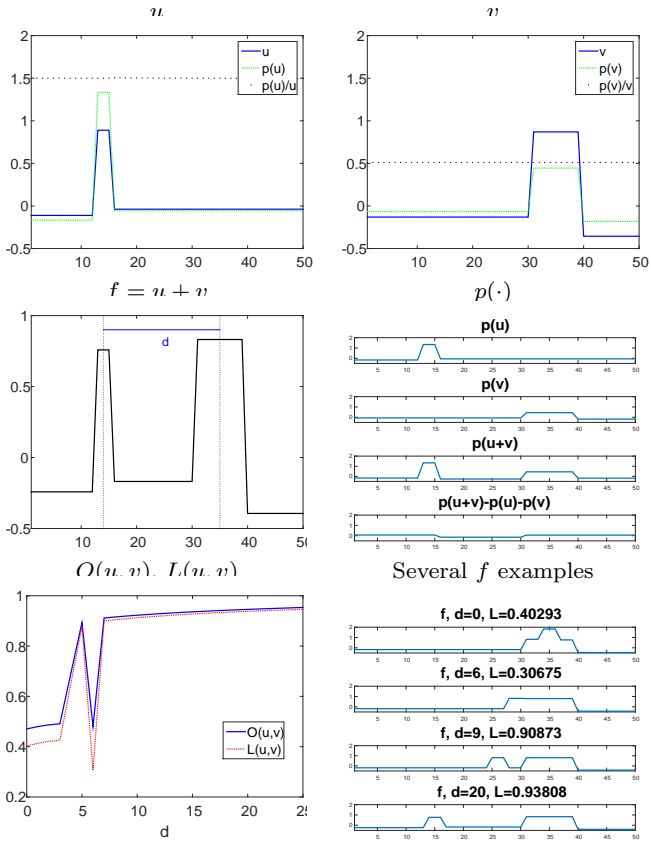

\begin{center}
\begin{tabular}{ cc }
$u$ & $v$\\
\includegraphics[width=40mm]{1d_u_10K_iter_a1.eps}&
\includegraphics[width=40mm]{1d_v_10K_iter_a1.eps}\\
$f=u+v$ & $p(\cdot)$\\
\includegraphics[width=40mm]{1d_f1.eps} &
\includegraphics[width=38mm]{1d_p_a1.eps}\\
$O(u,v)$, $L(u,v)$ & Several $f$ examples \\
\includegraphics[width=40mm]{1d_OL.eps} &
\includegraphics[width=38mm]{1d_fdL.eps}\\
\end{tabular}
\caption{Comparison of $\mathcal{O}(u,v)$ and $L(u,v)$ for the 1D case of $u$ and $v$ being
2 precise TV eigenfunctions in a finite domain. Top row: $u$ (blue), $p(u)$ (green) and $\frac{p(u)}{u}$ (black, dashed) are shown on the left,  $v$, $p(v)$ and $\frac{p(v)}{v}$ (right).
Middle row (from left), $f=u+v$ and $d$ is shown which is the distance between the centers of $u$ and $v$.
An example ($d=21$) of $p(u)$, $p(v)$, $p(u+v)$ and the difference $p(u+v)-p(u)-p(v)$ (from top subplot, respectively). As the difference vanishes the LIS measure $L(u,v)$ approaches 1.
Bottom row, $O(u,v)$ (blue) and $L(u,v)$ (red, dashed) as a function of the distance $d$. On the right,
several cases of $f$ for different values of $d$.
}
\label{fig:1d_sigs}
\end{center}
\end{figure}

\begin{figure}[htb]
\begin{center}
\includegraphics[width=60mm]{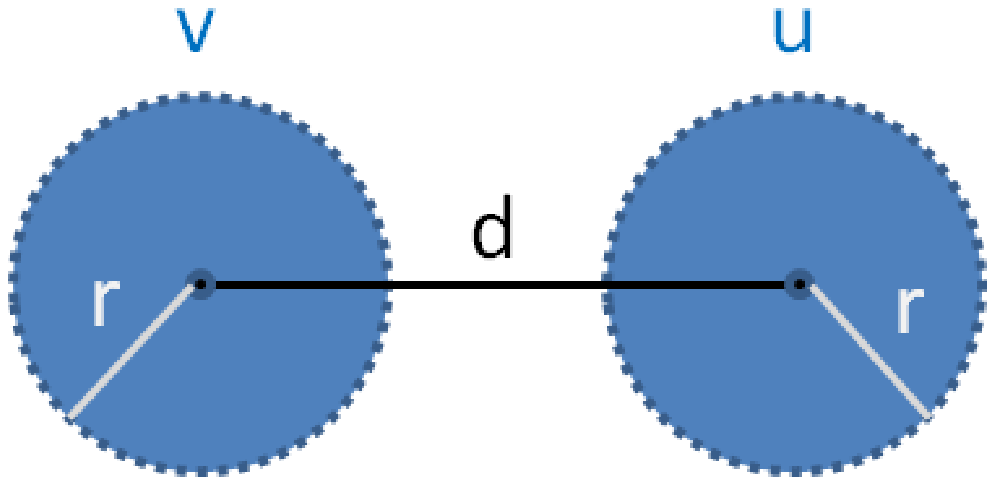}
\includegraphics[width=80mm]{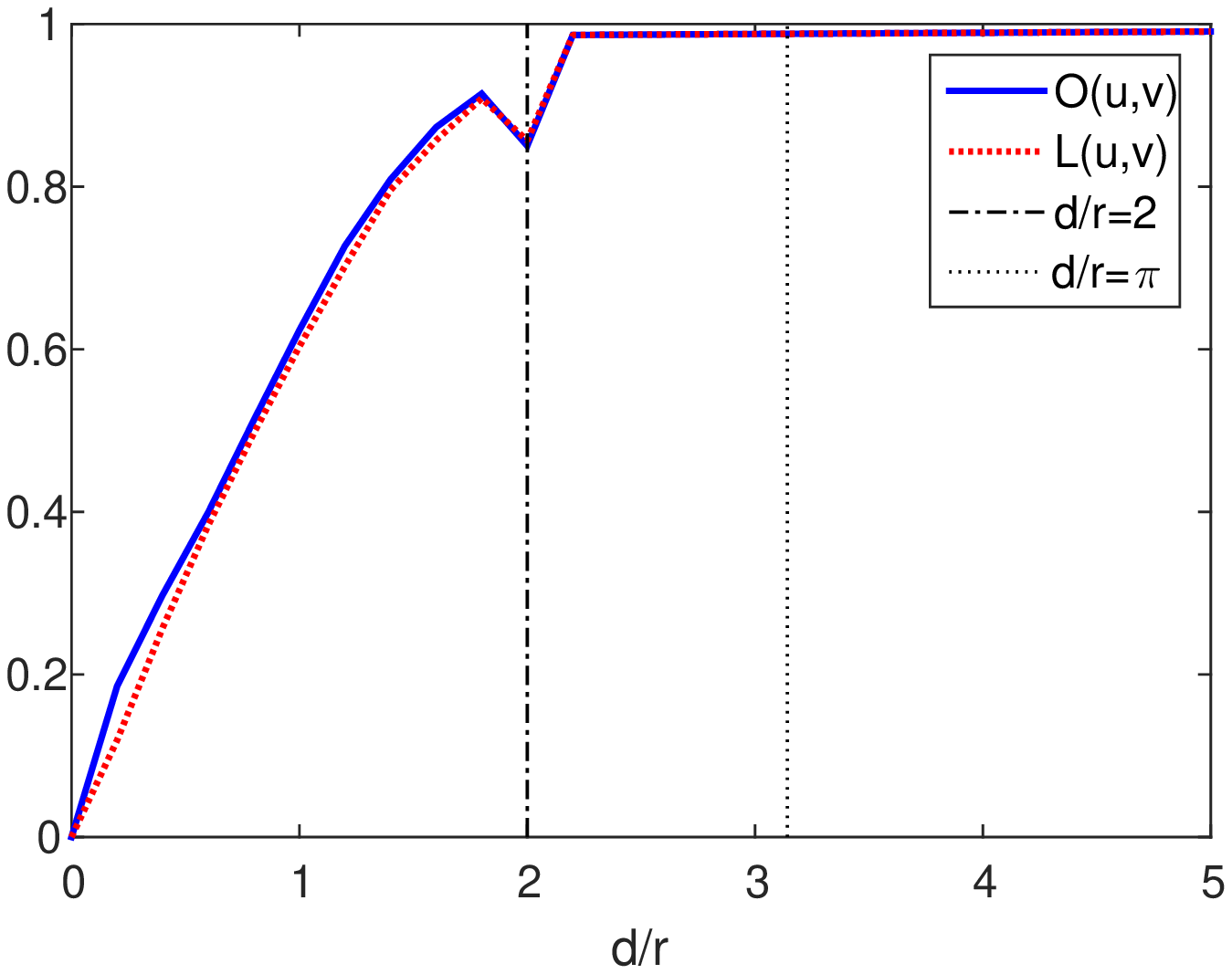}
\caption{Experiment of 2 identical discs. Top -  illustration of the discs $u$ and $v$ of radius $r$ and the distance $d$ between
the centers of the discs. Bottom $\mathcal{O}(u,v)$, Eq. \eqref{eq:orth}, and $L(u,v)$, Eq. \eqref{eq:LIS_ind}, as a function
of $d/r$.}
\label{fig:2circles_diag}
\end{center}
\end{figure}

\begin{figure}[htb]
\begin{center}
\includegraphics[width=90mm]{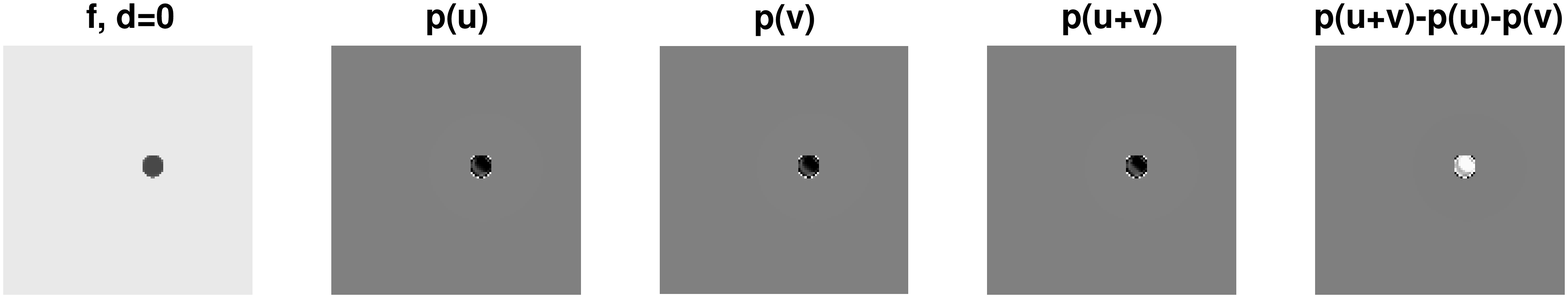}
\includegraphics[width=90mm]{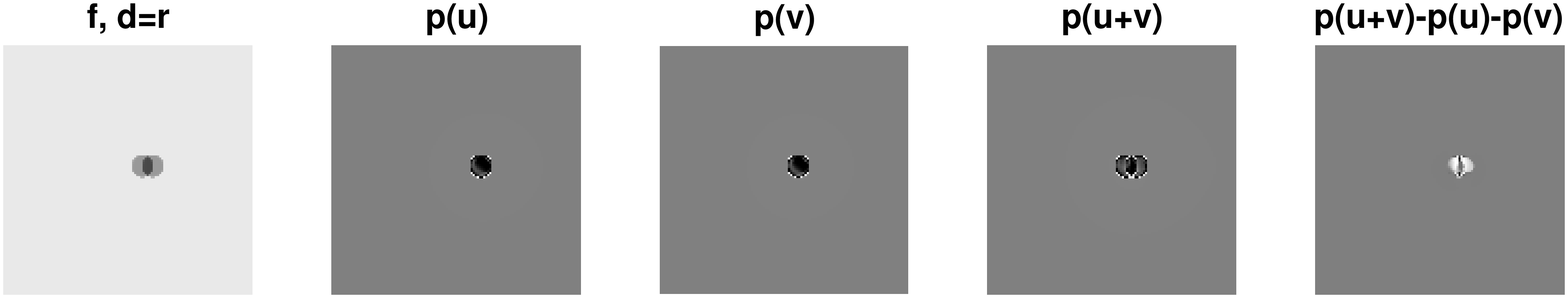}
\includegraphics[width=90mm]{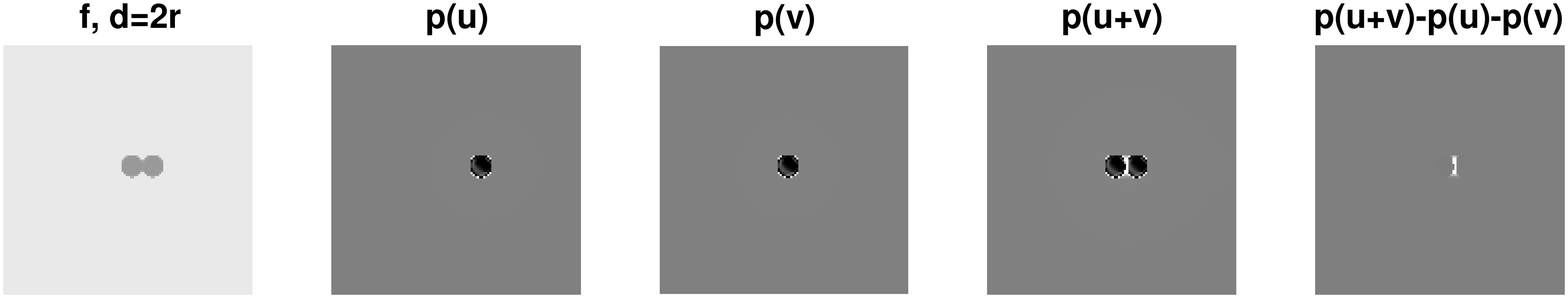}
\includegraphics[width=90mm]{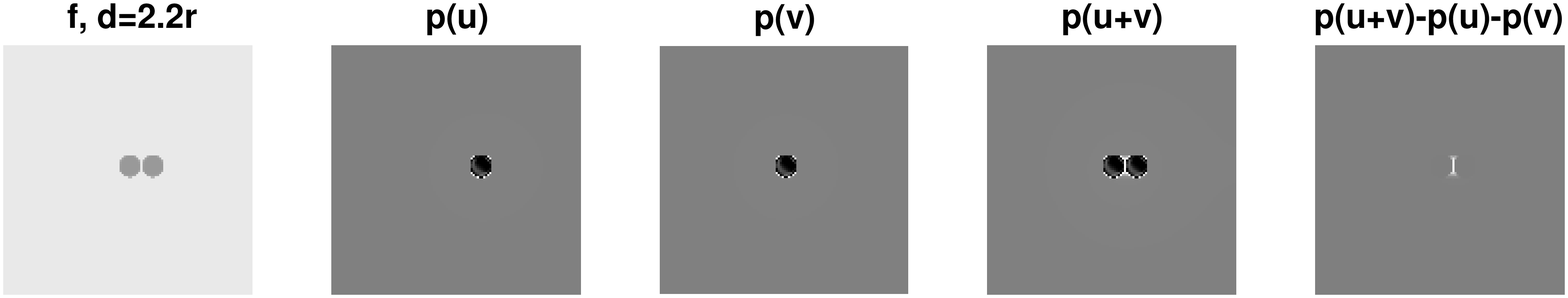}
\includegraphics[width=90mm]{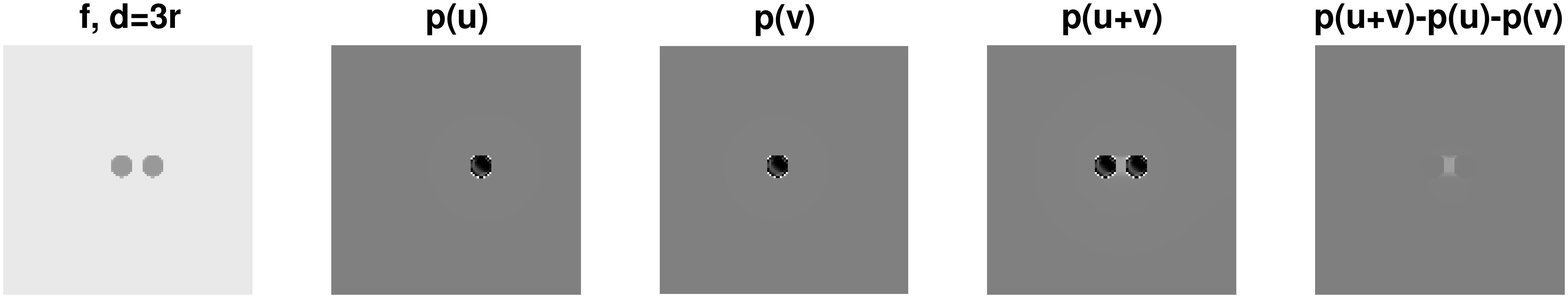}
\includegraphics[width=90mm]{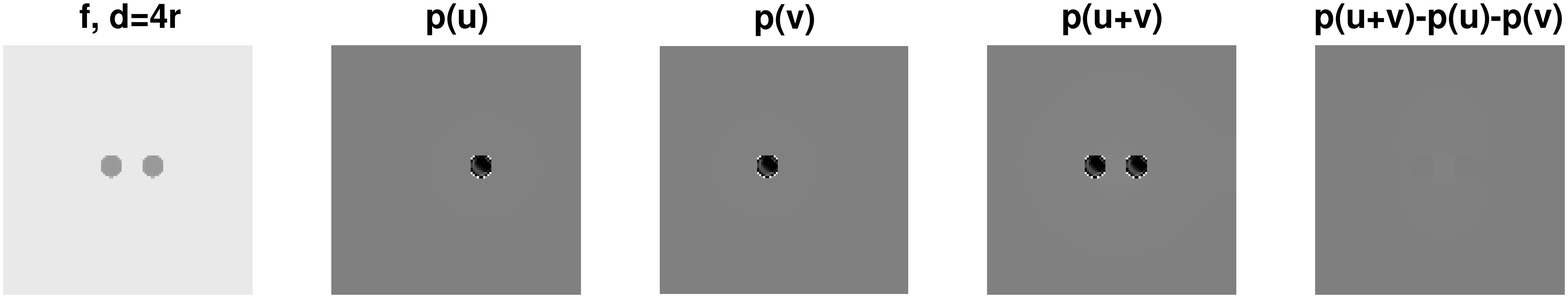}
\caption{A few examples of the 2 discs experiment. Each column (from left):
 $f=u+v$, $p(u)$, $p(v)$, $p(u+v)$, $p(u+v)-p(u)-p(v)$.
 The rows are results of different distances between the discs,
 $d/r=0,1,2,2.2,3,4$.
 }
\label{fig:2circles_r}
\end{center}
\end{figure}

\section{Experiments}
The following experiments are performed to show the behavior of the soft measures described in the previous section for the TV functional.
We compare the orthogonality measure $\mathcal{O}(u,v)$, Eq. \eqref{eq:orth}, and the LIS
measure $L(u,v)$, Eq. \eqref{eq:LIS_ind}.

In Figs. \ref{fig:blobs} and \ref{fig:blobs_S} two cases are shown. In the first one (Fig. \ref{fig:blobs} top 2 rows) $f_1=u_1+v$, where $u_1$ and $v$ are two blobs which are spatially well separated. The decomposition indicators are close to 1 ($\mathcal{O}(u_1,v)=L(u_1,v)=0.988$). A high-pass-filter, as defined in \eqref{eq:hpf}, was used to separate $u_1$ with a cutoff between the peaks, see the green line in Fig. \ref{fig:blobs_S}, bottom left, which visualizes the filter transfer function. One can observe a relatively good separation (with some residual of $v$ as it is not a precise eigenfunction). In Fig. \ref{fig:blobs_S} the spectrum of $u_1$ and $v$ are shown and the spectrum of their sum superimposed on the spectrum of $f_1$ (bottom left), which are close to identical.

The case of overlapping signals $u_2$ and $v$, $f_2=u_2+v$, is shown as well with significantly lower
$\mathcal{O}$ and $L$ indicators and lower quality decomposition (the spectra are also not additive).

In Fig. \ref{fig:1d_sigs} $u$ and $v$ are constructed to be precise discrete eigenfunctions. One can see numerically the pointwise ratio at the top (black, dashed) $\frac{p(u)}{u}$ which is practically constant (for all x). This means that $u$ is indeed an
eigenfunction of TV and admits $p(u)=\lambda u$. The same goes for $v$ on the top right side.
We denote by $d$ the distance between the centers of the peak parts of $u$ and $v$ (shown on
the second row on the left). The eigenfunction $u$ is displaced from being at $d=0$ to $d=25$,
where for each $d$ the measures $\mathcal{O}(u,v)$ and $L(u,v)$ are computed.
Both indicators are well correlated, with $L$ yielding slightly sharper results.
As can be expected, as the peaks of the functions $u$ and $v$ are farther apart, decomposition is easier and both indicator approach 1. Several instances of the composition $f=u+v$ are shown on the bottom
row on the right.

In Figs. \ref{fig:2circles_diag} and \ref{fig:2circles_r} a 2D experiment is shown.
Here $d$ is the distance between the centers of two discs of identical size (radius).
In the continuous case, in an unbounded domain $\mathbb{R}^2$ a disc is an eigenfunction of TV.
Here we have a bounded domain and cannot produce discretely real discs, so this is an approximation.
As those discs are identical (radius and height), in principle they cannot be decomposed through
spectral filtering since they have the same eigenvalue. However we can compare this case to a
theoretical analysis done by Bellettini et al \cite{bellettini2002total}.
It was shown in \cite{bellettini2002total} that for two identical discs of radius $r$ the sum of the two discs is
also an eigenfunction (meaning they admit (LIS)) if $d \ge \pi r$.
Therefore the values of $\mathcal{O}(u,v)$ and $L(u,v)$ are plotted as a function of $\frac{d}{r}$,
with critical points at $\frac{d}{r}=2$, that is the discs are just separated but touch each other at
a single point, and at $\frac{d}{r}=\pi$, the theoretical critical distacne.
As can be seen, $\mathcal{O}$ and $L$ are almost identical here, however the critical point
may not be that significant and as soon as the discs do not touch each other,
  $\frac{d}{r}>2$, the values approach 1 fast.
One can notice in the numerical examples for several $d$ values on the right of Fig. \ref{fig:2circles_r}
that for $\frac{d}{r}=4 > \pi$ indeed we get that $p(u+v)-p(u)-p(v)$ almost vanishes numerically.


\off{
CAN WE SAY $ \| p(f_1+f_2)\| \le \| p(f_1)\| + \| p(f_2)\|$ ?  

For independent signals we have $\| p(f_1+f_2)\| = \|p(f_1)\| + \|p(f_2) \|$ and therefore $\mathcal{I}(f_1,f_2)=1$.
For completely correlated signals $f_2 = \alpha f_1$, $\alpha >0 $, we have
$$\| p(f_1+f_2)\| = \| p((1+\alpha)f_1)\|=\|p(f_1)\|=\|p(f_2)\|,$$ and $\mathcal{I}(f_1,f_2)=0$.
In the case where the signal or part of it are being removed (in the null space) the measure can be negative, and in the
extreme case $f_2 = -f_1$, with $p(0)=0$ we get the complete loss of both signals and
$\mathcal{I}(f_1,f_2)=-1$.
} 

\section{Conclusion}
In this work several new concepts were presented, which can be helpful in future theoretical understanding and better
employment of  convex regularizers.
The properties of semi-inner-products for convex functionals were stated, following the s.i.p. of Lumer for normed spaces.
Essentially, linearity and homogeneity are kept in the first argument, the functional is induced by the s.i.p. and a Cauchy-Schwartz-type
property holds. The s.i.p., however, does not behave linearly with respect to the second argument.
For non-smooth functionals s.i.p.'s are similar to the subdifferential and may contain several elements (in this case
it is unique when a subgradient element is chosen).

For the one-homogeneous case a general formulation of the s.i.p. was given. This yields natural definitions of orthogonality and angles
between 2 functions, with respect to regularizing functionals like TV or TGV.
The relation to the Bregman distance was shown, where in the one-homogeneous case the Bregman distance between two functions can be expressed in terms of
the angle between those functions.

An extension of s.i.p.'s to general degrees was suggested, where the case of half-semi-inner-products (h.s.i.p.) was further developed.
Finally, it was shown that when the h.s.i.p. is linear in the second argument one can decompose two eigenfunctions (with different eigenvalues) perfectly, using the spectral filters proposed in \cite{Gilboa_spectv_SIAM_2014,spec_one_homog15}.
As the conditions for perfect decomposition are quite strict, two soft indicators based on s.i.p.'s and h.s.i.p.'s were suggested. Their goal is to measure how close we are to fulfilling those conditions. Initial experiments indicate both measures are useful in assessing the separability of signals with a dominant scale (where the one based on the (LIS) property yields slightly sharper results).

\small{
\bibliography{guy_bib}

}
\end{document}